\documentclass[11pt, reqno]{amsart}

\usepackage{amssymb}
\usepackage{graphicx}
\usepackage{amsfonts}
\usepackage{amsthm}
\usepackage{amsmath}
\usepackage{empheq}
\usepackage{algorithm}
\usepackage{algorithmic}
\usepackage[T1]{fontenc}
\usepackage{xcolor}
\usepackage{bbm}

\usepackage{subcaption}
\usepackage{epsfig}
\usepackage{epstopdf}

\usepackage[left=1in, right=1in]{geometry}

\usepackage[bookmarksnumbered=true, citecolor=blue, colorlinks=true]{hyperref}
\usepackage{cleveref}

\usepackage[numbers,sort&compress]{natbib}

\theoremstyle{plain}
\newtheorem{theorem}{Theorem}[section]
\newtheorem{lemma}[theorem]{Lemma}
\newtheorem{corollary}[theorem]{Corollary}
\newtheorem{assumption}[theorem]{Assumption}
\theoremstyle{definition}

\newtheorem{example}[theorem]{Example}

\newtheorem{proposition}[theorem]{Proposition}

\theoremstyle{remark}

\numberwithin{equation}{section}

%%%%%%%%%%      Special numbers     %%%%%%%%%%%%%%

%%%%	Special functions
\newcommand{\Prox}{\text{Prox}}

%%%%	Special Sets
\newcommand{\bR}{\mathbb R}

%%%%	Spaces
\newcommand{\cG}{\mathcal{G}}

%%%%	Vectors
\newcommand{\vb}{\boldsymbol b}
\newcommand{\vu}{\boldsymbol u}
\newcommand{\vw}{\boldsymbol w}
\newcommand{\vx}{\boldsymbol x}
\newcommand{\vy}{\boldsymbol y}
\newcommand{\vz}{\boldsymbol z}

%%%%	Matrices
\newcommand{\mA}{\mathbf A}
\newcommand{\mPhi}{\pmb \Phi}
\newcommand{\mB}{\mathbf B}
\newcommand{\mH}{\mathbf H}
\newcommand{\mI}{\mathbf I}
\newcommand{\mL}{\mathbf L}
\newcommand{\mW}{\mathbf W}
\newcommand{\mX}{\mathbf X}
\newcommand{\mZ}{\mathbf Z}

%%%%	Comments

\begin{document}

\title[Divide-and-Conquer Algorithm]{A Divide-and-Conquer  Algorithm for Distributed Optimization on Networks}

%    Information for first author
\author{Nazar Emirov}
%    Address of record for the research reported here
\address{Department of Computer Science, Boston College, Chestnut Hill, Massachusetts 02467}
%    Current address
%\curraddr{Department of Mathematics and Statistics,
%Case Western Reserve University, Cleveland, Ohio 43403}
\email{nazar.emirov@bc.edu}
%    \thanks will become a 1st page footnote.
%\thanks{The first author was supported in part by NSF Grant \#000000.}

\author{Guohui Song}
\address{Department of Mathematics and Statistics, Old Dominion University, Norfolk, Virginia 23529}
\email{gsong@odu.edu}

%    Information for second author
\author{Qiyu Sun}
\address{Department of Mathematics, University of Central Florida, Orlando, Florida 32816}
\email{qiyu.sun@ucf.edu}

\thanks{The project is partially supported by the National Science Foundation
DMS-1816313 and DMS-1939203.}

%\keywords{kk}

\begin{abstract}
In  this paper, we consider networks with topologies described by some connected undirected graph  $\cG=(V, E)$
 and with some agents (fusion centers) equipped with processing power and local peer-to-peer communication,
and optimization problem $\min_{{\vx}}\big\{F({\vx})=\sum_{i\in V}f_i({\vx})\big\}$ with local objective functions $f_i$  depending only on neighboring variables of the vertex $i\in V$. We introduce a divide-and-conquer algorithm  to solve the above  optimization problem in a distributed and decentralized manner. The proposed divide-and-conquer algorithm has exponential convergence, its computational cost is almost linear with respect to the size of the network, and it can be fully implemented at fusion centers of the network. Our numerical demonstrations also indicate that the proposed divide-and-conquer algorithm has superior performance than  popular decentralized optimization methods do for the least squares problem  with/without $\ell^1$ penalty.
 \end{abstract}

\maketitle

\section{Introduction}

Networks have been widely used in many real world applications, including  (wireless) sensor networks, smart grids,  social networks and epidemic spreading \cite{Akyildiz2002, Chong2003, Shuman2013, Ortega2018, Yick2008, Molzahn17, Hebner2017}. Their complicated topological structures could be described by some graphs with vertices representing agents and edges between two vertices indicating the availability of a  peer-to-peer communication between agents, or the functional connectivity between neural regions in brain, or the correlation between temperature records of neighboring weather stations. Graph signal processing and graph machine learning provide innovative frameworks to process and  learn data on networks. By leveraging graph  spectral theory and applied harmonic analysis, many concepts in the classical Euclidean setting have been extended to the graph setting, such as graph Fourier transform, graph wavelet transform and graph filter banks, in recent years \cite{Recht11, Kekatos13,  Giannakis16, Cheng19, Cheng17,  Chen14, Ozdaglar17}. Graph machine learning has also developed new tools, including graph representation learning, graph neural networks and geometric deep learning,  to process data on networks \cite{Bronstein2021, Bronstein2017, Hamilton2017a, Zhou2020}. In this paper, we introduce a divide-and-conquer algorithm, DAC for abbreviation, to solve the following optimization problem
\begin{equation}\label{MainOptimizationProblem}
%\tilde{\vx}=\arg
\min_{{\vx}\in \mathbb{R}^N}\Big\{F({\vx})=\sum_{i\in V}f_i({\vx})\Big\}
\end{equation}
on a connected undirected graph $\cG:=(V, E)$ of  order $N\ge 1$, where ${\vx}=(x(i))_{i\in V}$ and local objective functions $f_i$ depend only on neighboring variables of the vertex $i\in V$, see Assumption \ref{localobjectivefunction.assump}. The formulation \eqref{MainOptimizationProblem} can be seen in various applications such as wireless communication, power systems, control problems, empirical risk minimization, binary classification, etc \cite{Giannakis16, Kekatos13, Ozdaglar17, Molzahn17}.

Many networks in modern infrastructure have large amount of agents correlated with each other and the size of data set on the network  required to process is also huge. It is often impractical or even impossible to have a central server to collect and process the whole data set. Hence it is of great importance to design distributed and decentralized algorithms to solve the global optimization problems, that is, the storage and processing of data need to be distributed among agents and the local peer-to-peer communication  should be employed to handle the coordination of results from each agent instead of a central server \cite{Yang2019,Boyd11,Nedich2015,Sayed2014a,Nedic2018,Bertsekas1989}. In this paper, we consider the optimization problems on networks with a  two-layer hierarchical structure: 1) Most of agents of the network have no or very little computational power and their communication ability is only limited to transmit the data within certain range; 2) Some agents, called {\em fusion centers},  have greater computational power and could communicate with nearby  agents and neighboring fusion centers within certain distance. In other words, each fusion center is a ``central'' server for its  neighboring subnetwork of small size and there is no central server for the whole network, see Section \ref{fusioncenter.subsection}, Assumption \ref{fusioncenter.assump} and an illustrate example displayed in \Cref{FusionCenters.fig}. The above two-layer hierarchical structure on networks is between two extremes: centralized networks and  fully decentralized networks,  where the centralized network usually assumes only the central server has data processing ability and each agent sends the data to the central server for processing, while the fully decentralized network assumes that each agent has both communication and computation ability to store the data and perform the computation on their own. For a centralized network, it might require an extremely powerful central server and a high demand for each agent to transmit data to the central server. On the other hand, fully decentralized networks might not be practical in some applications in which every agent is not powerful enough to implement the local computation. Our DAC algorithm to solve the distributed optimization problem
\eqref{MainOptimizationProblem} is designed to be completely implemented on the two-layer hierarchical structure, see \Cref{Main.algorithm}.

Most of the existing decentralized methods are based on the consensus technique, where  each agent holds a local
estimate of the global variable and these local estimates  reach a consensus after certain iterations
\cite{Ozdaglar17, Wei12, Shi2014, Fazlyab18, Boyd11, Shi15, Zhang14, Xin2020}. Each agent of the network  would evaluate its  local gradient independently and communicates only with its neighbors to  update local estimate of the global variable, which makes it amenable to parallel computation and eliminates the large communication bandwidth of a central server. However, the computational cost at each agent still depends on the size of the local variable (the same size of the global variable), which indicates that  the local computation cost at each agent might still be quite expensive. In the decentralized implementation of our DAC algorithm, each agent will only update/estimate a few components of the global variable  and communicate with its neighbors to have an aggregate combination of the local solutions, and hence the subproblem for each agent has a much smaller size than the global problem, and the solutions of subproblems will be combined to get the overall solution of the global problem, see \eqref{localminimization.def0} and \Cref{Main.algorithm}.

A key difference of the proposed DAC algorithm from other decentralized methods is the size reduction of the subproblems. Most existing decentralized methods distribute the computation of gradients to each node, but the size of the subproblem at each node usually depends on the dimension of the global variable and remains the same as the global problem. The proposed DAC further distribute the estimates of the components of the global variable to each fusion center, and hence the subproblem would have a much smaller size than the global problem, see \eqref{DACalgorithm.defa}. This reduces both the storage and the computation requirements of each fusion center, and is particularly useful when the global variable has a huge number of components. Another difference is the hierarchical network structure to implement DAC. Even though we will focus on a two-layer hierarchical network structure in this paper, the proposed DAC method would be readily to extended to multiple layers. This provides flexibility in computation/communication and also makes it more scalable than many other decentralized methods. Moreover, the exponential convergence of the proposed DAC method is ensured by a thorough theoretical analysis.

The paper is organized as follows. In  Section \ref{preliminaries.section}, we recall some preliminaries on the underlying graph $\cG$ and the objective function $F$  of  the optimization problem \eqref{MainOptimizationProblem}, and we introduce fusion centers to  implement the proposed DAC algorithm. In Section \ref{DistributedOptimization.section}, we introduce a novel DAC algorithm \eqref{DACalgorithm.def} to solve the optimization problem \eqref{MainOptimizationProblem} in a distributed and decentralized manner and present a scalable implementation at fusion centers, see \Cref{Main.algorithm}. In Section \ref{convergence.section},  we discuss the exponential convergence of the iterative DAC algorithm and  provide an error estimate when the local minimization in the DAC algorithm  is inexact, see Theorems \ref{ConvergenceTheorem} and \ref{inexactConvergenceTheorem}. In Section \ref{NumericExamples}, we demonstrate the performance of the proposed DAC algorithm and make comparisons with some popular decentralized optimization methods. In Section \ref{proof.section}, we collect the proofs of all theorems.

\section{Preliminaries}  % on  Graph Structures}
\label{preliminaries.section}

In our proposed DAC algorithm to solve the optimization problem \eqref{MainOptimizationProblem}, the underlying graph $\cG=(V, E)$ is connected and undirected and it has  polynomial growth property (see \Cref{polynomialgrowth.subsection}), the global objective function  $F$ is strongly convex and each local objective function $f_i$ is smooth and dependent only on neighboring variables of the vertex $i\in V$
 (see Section \ref{objectivefunction.subsection}), and fusion centers to implement the DAC algorithm  are
 appropriately located   (see Section \ref{fusioncenter.subsection}) and equipped with enough processing power and resources (see \Cref{fusioncenter.assump} in \Cref{DistributedOptimization.section}).

\subsection{Graphs with polynomial growth property}\label{polynomialgrowth.subsection}

For a connected undirected  graph $\cG:=(V, E)$, let the  {\em geodesic distance}  $\rho(i,j)$ between vertices $i$ and $j\in V$ be  the number of edges in the shortest path to connect $i$ and $j$. Using the geodesic distance $\rho$, we denote the set of  all $R$-neighbors of a vertex $i\in V$ by
$$B(i,R)=\big\{j\in V, \ \rho(i,j)\le R\big\}, \ R\ge 0.$$

 Let $\mu_\cG$ be the {\em counting measure} on the graph  $\cG$ defined by $\mu_\cG(W)=\# W$, the cardinality of  a subset $W\subset V$. We say a connected undirected graph $\cG=(V, E)$ has  {\em polynomial growth property} if there exist positive constants  $d(\cG)$ and $D_1(\cG)$  such that
\begin{equation}\label{PolynomialGrowth}
    \mu_{\cG}(B(i,R))\le D_1(\cG)(R+1)^{d(\cG)}\ \ \text{for all}\  i\in V \text{ and } \ R\geq 0.
\end{equation}
The minimal positive constants $d(\cG) $ and $D_1(\cG)$ in \eqref{PolynomialGrowth} are known as {\em Beurling dimension} and {\em density} of the graph $\cG$ respectively \cite{Cheng17}.

In this paper, we make the following assumption on the underlying graph of our optimization problem \eqref{MainOptimizationProblem}.
\begin{assumption} \label{graphstructure.assumption}
The underlying graph $\cG$
is connected  and undirected with order $N$, and it  has polynomial growth  property with   Beurling dimension and density denoted by
 $d({\mathcal G})$ and  $D_1({\mathcal G})$ respectively.
 \end{assumption}

A stronger assumption on the graph $\cG$ than its polynomial growth property is that the counting measure $\mu_{\cG}$ is a {\em doubling measure}, i.e.,  there exists a positive number $D_0(\cG)$ such that
\begin{equation}\label{DoublingMeasure.def}
\mu_{\cG}(B(i,2R))\le D_0(\cG) \mu_{\cG}(B(i,R)) \ \ \text{ for all } i\in V \text{ and } R\ge 0
\end{equation}
\cite{Cheng17, Shin19, Yang2013}.
The smallest constant $D_0(\cG)$ in \eqref{DoublingMeasure.def} is known as the {\em doubling constant} of the counting measure. In other words, the doubling property for the counting measure indicates that for any vertex $i$, the number of $(2R)$-neighbors is at most a multiple of the number of $R$-neighbors. It is direct to observe that the doubling property \eqref{DoublingMeasure.def} for the counting measure $\mu_{\mathcal G}$ implies the polynomial growth property \eqref{PolynomialGrowth} with
$d(\cG)\le \log_2(D_0(\cG))$ and $D_1(\cG)\le D_0(\cG)$,
since for all $R\ge 1$,
$$\mu_{\cG}(B(i,R)) \le (D_0(\cG))^l\mu_{\cG}(B(i,R/2^l))=  (D_0(\cG))^l \le D_0(\cG) (R+1)^{\log_2(D_0(\cG))},
$$
where $l$ is the integer  satisfying $2^{l-1}\le R< 2^{l}$.

Our illustrative examples of connected undirected graphs are random geometric  graphs generated by the GSPToolbox, where  $N$ vertices are randomly deployed in the unit square $[0, 1]^2$ and an edge existing between two vertices if their Euclidean distance is not larger than  $\sqrt{2/N}$ \cite{Jiang19, Nathanael14}, see Figure \ref{FusionCenters.fig}.
\begin{figure}[t] %[h!]
\vspace{-.1in}
      \begin{center}
\includegraphics[width=50mm, height=55mm]{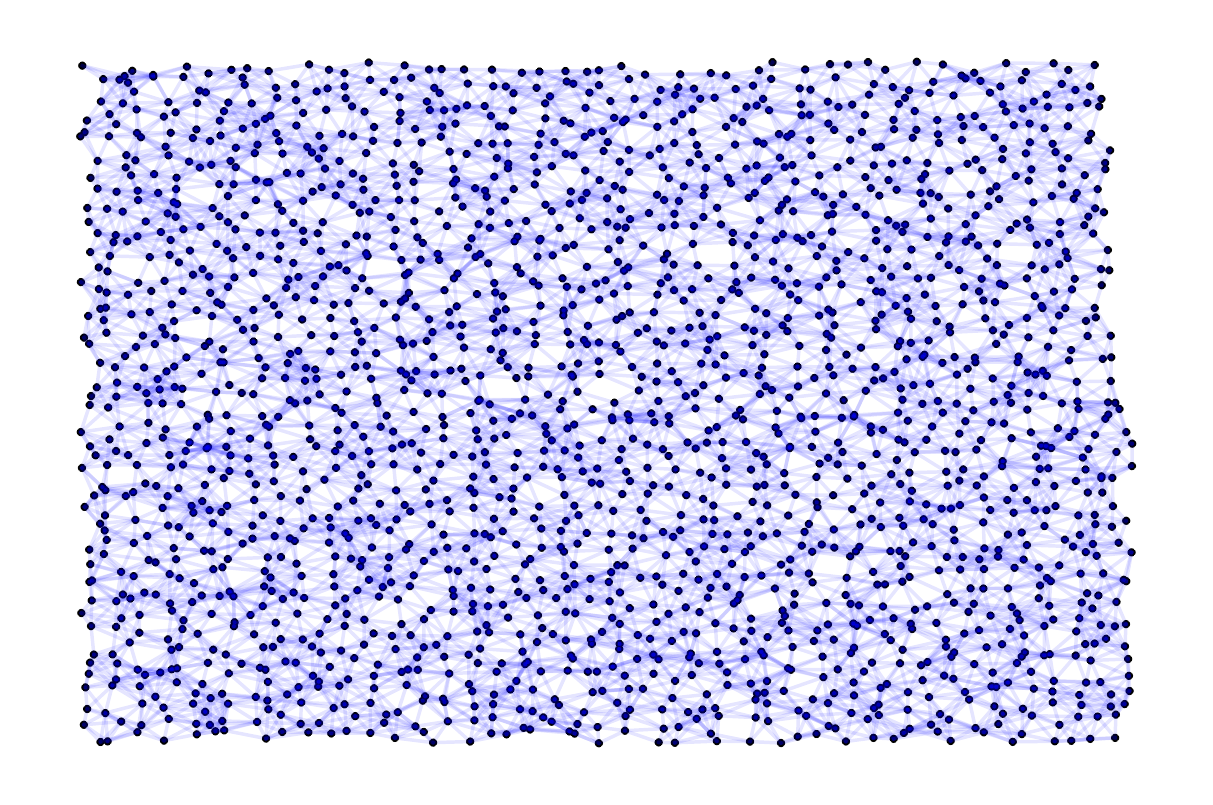}
\includegraphics[width=50mm, height=55mm]{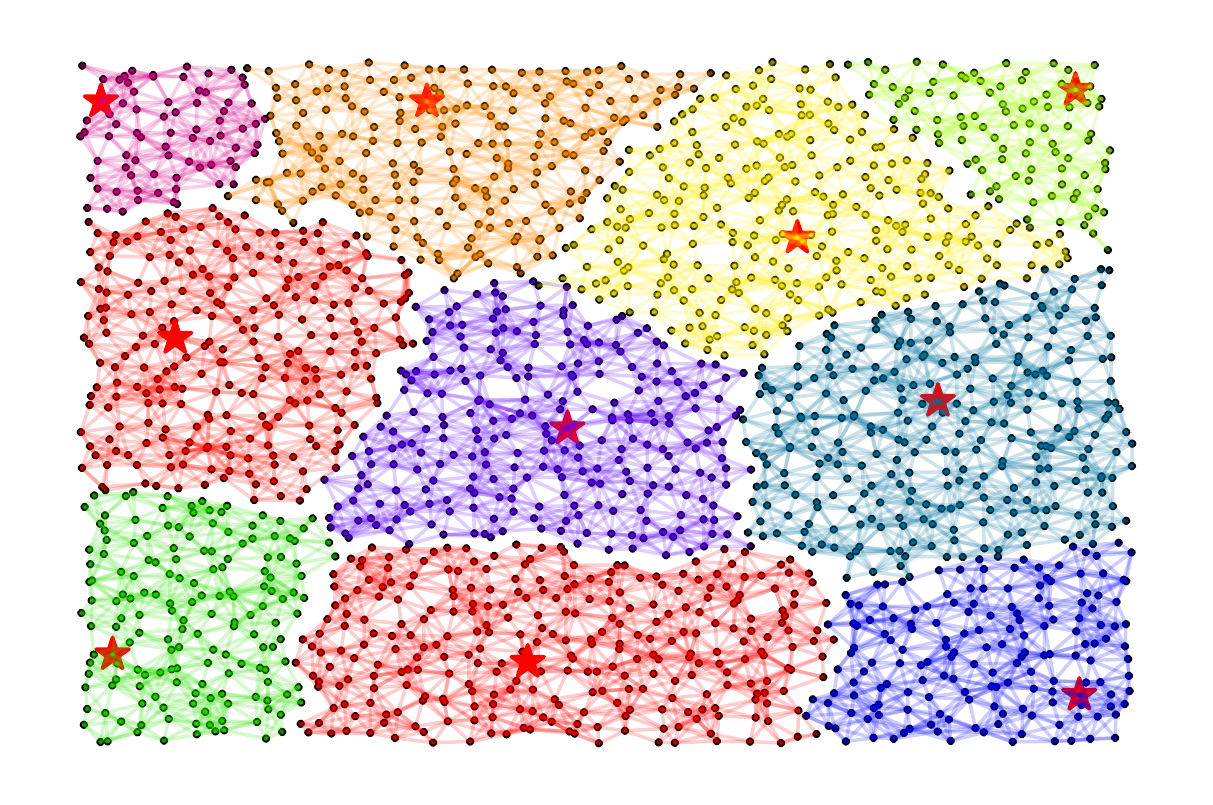}
\includegraphics[width=50mm, height=55mm]{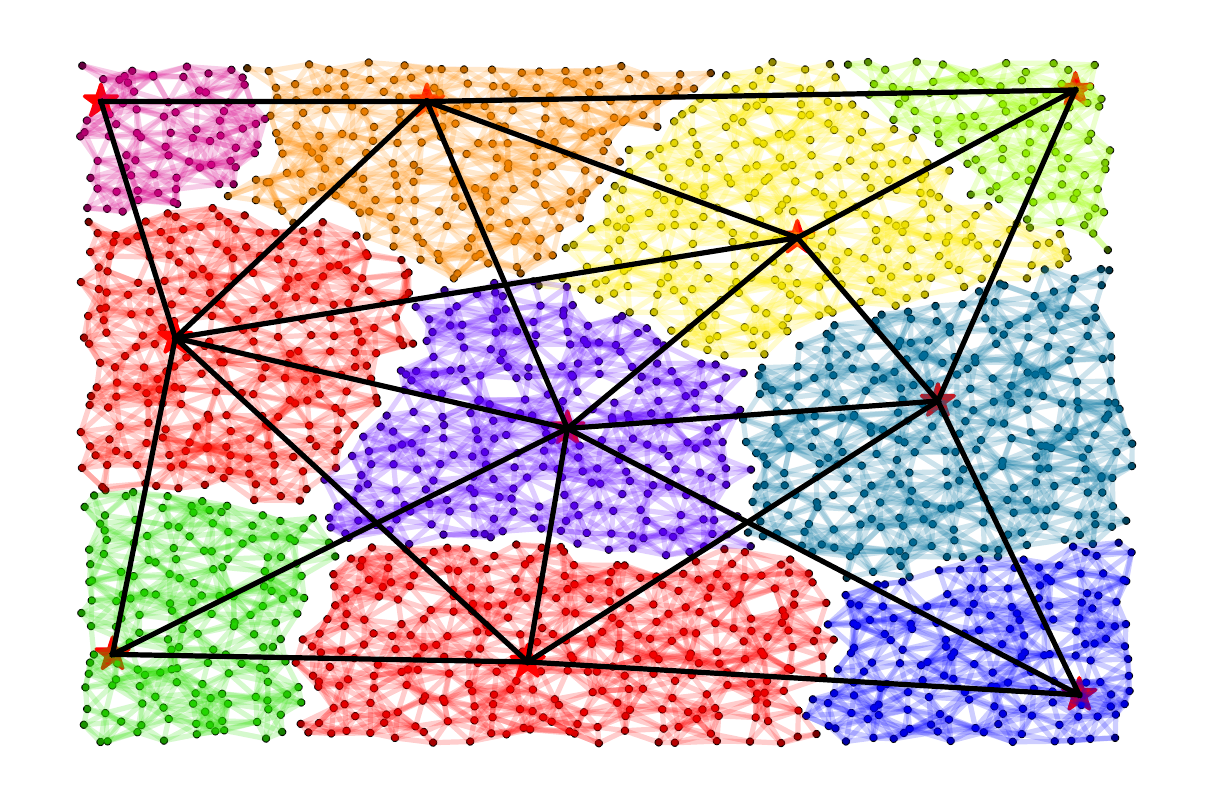}
\caption{Plotted on the  left is a  random geometric graph  ${\mathcal G}_N$ of the order $N=2048$ with  Beurling  dimension $d({\mathcal G}_{2048})=2$
and Beurling density $D_1({\mathcal G}_{2048})=7.6116$. In the middle is a family of  fusion centers marked in stars, and corresponding Voronoi diagrams $D_{\lambda}, \lambda\in\Lambda$, marked with various colors. On the right is the communication network for the fusion centers, marked by black solid line, where fusion center $\lambda$ communicates with $\lambda'$ if  $D_{\lambda}\cap D_{\lambda',R,2m}\neq \emptyset$ with $R=3$ and $m=1$, see Assumption \ref{fusioncenter.assump}.}
\label{FusionCenters.fig}
\end{center}
\vspace{0.2in}
\end{figure}

\subsection{Local and global objective functions}\label{objectivefunction.subsection}

In this paper,  the local objective functions $f_i, i\in V$, in the optimization problem \eqref{MainOptimizationProblem} are assumed to be smooth and dependent only on neighboring variables.

\begin{assumption}\label{localobjectivefunction.assump} For each $i\in V$, the local objective function  $f_i(\vx)$ is continuously differentiable and it depends only on  $x(j), j\in B(i, m)$,  where $m\ge 1$ and $\vx=(x(j))_{j\in V}$.
\end{assumption}

For a  matrix ${\mA}=(a(i,j))_{i,j\in V}$ on the connected undirected graph $\cG=(V, E)$, we define its {\em geodesic-width} $\omega({\mA})$ to be the smallest nonnegative integer such that
\begin{equation*}
a(i,j)=0 \ \ {\rm  for\ all} \ \  i,j\in V \ \ {\rm with} \  \  \rho(i,j)>\omega({\mA}).
\end{equation*}
Then the neighboring variable dependence of the objective functions $f_i, i\in V$, in
 \Cref{localobjectivefunction.assump} can be described  by a geodesic-width requirement for the gradient of the vector-valued function $\vx\to (f_i(\vx))_{i\in V}$ on ${\mathbb R}^N$,
\begin{equation}\label{assumption2.characterization}
\omega\Big( \Big(\frac{\partial f_i(\vx)}{\partial x(j)}\Big)_{i, j\in V}\Big)\le m \ \ {\rm for \ all}\  \vx\in {\mathbb R}^N.
\end{equation}
Due to the above characterization, we use $m$  to denote the {\em neighboring radius} of the local objective functions $f_i, i\in V$. In the classical least squares problem $\min_{\vx\in {\mathbb R}^N}  \|\mA \vx-\vb\|_2^2$
associated with the measurement matrix  ${\bf A}=(a(i,j))_{i,j\in V}$ having geodesic-width $m$ and the noisy observation data  ${\vb}=(b(i))_{i\in V}\in {\mathbb R}^N$,  one may verify that the local objective functions
$$f_i(\vx)=\Big(\sum_{j\in B(i,m)} a(i,j) x(j)-b(i)\Big)^2,\  i\in V,  $$
satisfy Assumption \ref{localobjectivefunction.assump} and have neighboring radius $m$.

For two square matrices $\mA$ and $\mB$ on the graph $\cG$, we say that ${\mA}\preceq {\mB}$ if $\mB-\mA$ is positive semi-definite. In this paper, the global objective function $F=\sum_{i\in V} f_i$ in the optimization problem \eqref{MainOptimizationProblem} are assumed to be smooth and strongly convex.

\begin{assumption}\label{globalobjectivefunction.assump}
There exist  positive constants $0<c<L<\infty$ and  positive definite matrices ${\mPhi}(\vx, \vy), \vx, \vy\in {\mathbb R}^{N}$, with geodesic-width $2m$  such that
\begin{equation}\label{GlobalHessianBounds.eq2}
\omega({\mPhi}(\vx, \vy))\le 2m\ \ {\rm and} \ \
  c {\mI}\preceq {\mPhi}(\vx, \vy)\preceq L {\mI},
\end{equation}
and
 \begin{equation}\label{GlobalHessianBounds.eq1}
\nabla F(\vx)-\nabla F(\vy)= {\mPhi}(\vx, \vy) (\vx-\vy)  \ \ {\rm for \ all} \ \vx, \vy\in {\mathbb R}^N.  \end{equation}
\end{assumption}

The requirement \eqref{GlobalHessianBounds.eq1}  can be considered as a strong version of its {\em strictly monotonicity} for the gradient $\nabla F$,
$$ (\vx-\vy)^T (\nabla F(\vx)-\nabla F(\vy)) \ge c  \|\vx-\vy\|^2 \ \ {\rm for \ all} \ \vx, \vy\in {\mathbb R}^N, $$
where $c$ is an absolute constant \cite{Zeidler1990, sun2014a}. On the other hand,  \Cref{globalobjectivefunction.assump} is satisfied  when the local objective functions $f_i, i\in V$, are twice differentiable and satisfy Assumption \ref{localobjectivefunction.assump}, and the global objective function $F$ satisfies the classical strict convexity condition
\begin{equation*}
c{\mI} \preceq \nabla^2 F(\vx)\preceq L{\mI}\ \ {\rm for \ all} \    \vx\in {\mathbb R}^N.
\end{equation*}
In particular, we have
$$   \nabla F(\vx)-
\nabla F({\vy}) =  \Big(\int_{0}^1 \nabla^2 F\big( t \vx+(1-t) {\vy}) dt\Big) (\vx-{\vy}) \ \ {\rm for \ all} \  {\vx, \vy}\in {\mathbb R}^N, $$
and
$$  \omega\big(\nabla^2 F(\vx)\big)\le 2m \ \ {\rm for \ all} \  {\vx}\in {\mathbb R}^N,
 $$
 where
$$
\frac{\partial^2 F(\vx)}{\partial x(i)\partial x(j)} = \sum_{k\in V}
  \frac{\partial^2 f_k(\vx) }{\partial x(i)\partial x(j)} %=   \sum_{k\in B(i,m)\cap B(j,m)} \frac{\partial^2 f_k(\vx) }{\partial x_i\partial x_j}
 = 0, \ \ \vx\in {\mathbb R}^N,$$
 hold for any $i,j\in V$ with $ \rho(i,j)>2m$,  since for any $k\in V$,   either $\frac{\partial f_k(\vx) }{\partial x(i)}=0$ or $\frac{\partial f_k(\vx) }{\partial x(j)}=0$ by \eqref{assumption2.characterization}.

\subsection{Fusion centers for distributed  and decentralized implementation}\label{fusioncenter.subsection}

In our distributed and decentralized implementation of the proposed  DAC algorithm, all processing of data storage, data exchange and numerical computation are conducted on fusion centers  located at some vertices of the graph $\cG$. Denote the location  set of fusion centers by $\Lambda$, a subset of the vertex set $V$ of the underlying graph $\cG$.   Associated with each fusion center $\lambda\in \Lambda$, we divide the whole set of vertices $V$ into a family of  {\em governing vertex sets}  $D_\lambda\subset V, \lambda\in \Lambda$ such that
\begin{equation}\label{governingvertices.def}
\cup_{\lambda\in \Lambda} D_\lambda=V\ \ {\rm and} \ \ D_\lambda\cap D_{\lambda'}=\emptyset \ \ {\rm for \ distinct} \ \lambda, \lambda'\in \Lambda.
\end{equation}
A common selection of the governing vertex sets is
the Voronoi diagram of $\cG$ with respect to $\Lambda$, which satisfies
\begin{equation*}
\{i\in V, \rho(i,\lambda)< \rho(i, \lambda') \ {\rm for \ all} \ \lambda'\ne \lambda\}\subset D_\lambda
\subset \{i\in V, \rho(i,\lambda)\le  \rho(i, \lambda') \ {\rm for \ all} \ \lambda'\ne \lambda\}, \ \lambda\in \Lambda.
\end{equation*}
In our setting, we do not  have any restriction on the sizes of the governing vertices $D_\lambda, \lambda\in \Lambda$, however it is more reasonable to assume that  $D_\lambda, \lambda\in \Lambda$, have similar sizes and are located in some ``neighborhood'' of  fusion centers, see our illustrative Example \ref{fusioncenter.example} below.

Denote the distance between two vertex subsets  $A, B$ of $V$ by $\rho(A, B)=\inf_{i\in A, j\in B} \rho(i,j)$.
In\ the proposed DAC algorithm,  we solve some local minimization problems on {\em extended $R$-neighbors} $D_{\lambda, R}$ of the governing vertex sets $D_\lambda, \lambda\in \Lambda$, which satisfy
\begin{equation}\label{extendedneighbor.def}
D_\lambda\subset D_{\lambda, R} \ \ {\rm and} \ \ \rho(D_\lambda, V\backslash D_{\lambda, R})>  R \ {\rm for \ all}\ \ \lambda \in \Lambda,
\end{equation}
where $R\ge 1$ is a positive number chosen later, see \eqref{deltaR.def}.  A simple choice of extended $R$-neighbors are the sets
$$D_{\lambda, R}=\cup_{i\in D_\lambda} B(i, R)=\{j\in V, \ \rho(j,i)\le R\ {\rm for \ some} \ i\in D_\lambda\},$$  of all $R$-neighboring vertices of $D_\lambda, \lambda\in \Lambda$.

Next we present an  example of the location set $\Lambda$ of fusion centers, the governing vertex sets $D_\lambda, \lambda\in \Lambda$, and their
extended $R$-neighbors  $D_{\lambda, R}, \lambda\in \Lambda$.
\begin{example} \label{fusioncenter.example}
We say that  $\Lambda\subset V$ is   a {\it maximal $R_0$-disjoint set} if
\begin{align*}
  B(i,R_0)\cap \Big(\bigcup_{\lambda\in \Lambda}B(\lambda,R_0)\Big)\neq \emptyset  \text{ for all } i\in V,
\end{align*}
and
\begin{align*}
  B(\lambda, R_0)\cap B(\lambda', R_0)=\emptyset  \  \text{ for  all distinct}\ \lambda, \lambda'\in \Lambda.
\end{align*}
Our illustrative example of  location/vertex set $\Lambda$ of fusion centers and the governing vertex sets $D_\lambda, \lambda\in \Lambda$ is a maximal $R_0$-disjoint set and the corresponding Voronoi diagram, see  Figure  \ref{FusionCenters.fig} where  $R_0=4$ and the underlying  graph  $\cG$ is  a geometric random graph with $N=2048$ vertices.

For  the above governing vertex sets $D_\lambda, \lambda\in \Lambda$, we have the following size estimate,
\begin{align*}
B(\lambda, R_0)\subseteq D_\lambda \subseteq B(\lambda, 2R_0) \ \ {\rm for \ all} \ \lambda\in \Lambda.
\end{align*}
For the case that  the sets of all $R$-neighboring vertices of $D_\lambda$ are chosen as extended $R$-neighbors
$D_{\lambda, R}=\cup_{i\in D_\lambda} B(i, R), \lambda\in \Lambda$, we obtain
 \begin{align*}
B(\lambda, R_0+R)\subseteq D_{\lambda, R} \subseteq B(\lambda, 2R_0+R) \ \ {\rm for \ all} \in \Lambda.
\end{align*}
\end{example}

We finish this section with a constructive approach, \Cref{GraphCovering.algorithm}, to construct a maximal $R_0$-disjoint set on  a connected undirected graph ${\cG}=(V, E)$ of order $N\ge 1$, and show that the total computational cost to implement \Cref{GraphCovering.algorithm} is  about $O(N^2)$. Here we say that two positive quantities $a$ and $b$ satisfy $b=O(a)$ if $b/a$ is bounded by an absolute constant $C$ independent on the order $N$
of the underlying graph $\cG$, which could be different at different occurrences and may depend on the radius $R_0$,
Beurling dimension $d({\mathcal G})$ and Beurling density $D_1({\mathcal G})$. Denote total number of steps used in the \Cref{GraphCovering.algorithm} by $M$ and the sets $U$ and $W$ at step $n$ by  $U_n$ and $W_n, 1\le n\le M$ respectively. Then one may verify by induction on $n\ge 1$ that
\begin{equation} \label{Wminclusion}
U_n\subset V\backslash W_n\subset \cup_{i\in U_n} B(i, R_0), \ 1\le n\le M,
\end{equation}
the sequence of cardinalities $\#W_n\in [0, N-1]$ of the sets $W_n, 1\le n\le M$, is  strictly decreasing,
\begin{equation}\label{Wmincreasingsequence}
\# W_{n+1}\le \#W_{n}-1, \ 1\le n\le M-1,\end{equation}
and  the sequence of  cardinalities $ \#U_n\in [1, N]$ of the sets $U_n, 1\le n\le M$ is increasing and has bounded increment,
\begin{equation}\label{Umincreasingsequence}
\# U_n \le \#U_{n+1}\le  \# U_n+ D_1({\cG})(R_0 + 1)^{d(\cG)}, \ 1\le n\le M-1,\end{equation}
where the last inequality  follows from \Cref{graphstructure.assumption}. By \eqref{Wminclusion} and \Cref{GraphCovering.algorithm}, the computational cost to find $j\in W_n$ and verify whether $B(j, R_0)\cap U_n=\emptyset$ for any given $j\in W_n$ is about $O(\#U_n), 1\le n\le M$. Therefore the total computational cost to implement \Cref{GraphCovering.algorithm} to find a maximal $R_0$-disjoint set is
 $$\sum_{n=1}^M O(\# U_n)=\sum_{n=1}^M O(n)= O(N^2), $$
where the first equality follows from  \eqref{Umincreasingsequence} and the second estimate hold as $M\le N$ by \eqref{Wmincreasingsequence}.

\begin{algorithm}[t] %[h!]
\caption{ Maximal $R_0$-disjoint subset algorithm}
\label{GraphCovering.algorithm}
\begin{algorithmic}  %[1]
\STATE{\bf Initialization}:  Pick one vertex $i\in V=\{1,2,...,N\}$,  and then set $\Lambda =\{i\}$, $U= B(i,R_0)$ and $W=V\backslash  B(i,R_0)$.
 \STATE{\bf for} $n=2,...,N$ \\
   \hspace{0.2in} Pick minimal $j\in W$  \\
   \hspace{0.2in} {\bf if} $B(j, R_0)\cap U= \emptyset$   \\
   \hspace{0.4in} $\Lambda\leftarrow \Lambda\cup\{j\}$;  $U\leftarrow U \cup B(j, R_0)$; $W\leftarrow W \backslash B(j, R_0)$\\
   \hspace{0.2in} {\bf else} \\
   \hspace{0.4in} $W\leftarrow W\backslash \{j\}$\\
   \hspace{0.2in} {\bf end}   \\
   \hspace{0.2in} {\bf if}   $\#W=0$\\
   \hspace{0.4in}    {\bf stop}\\
   \hspace{0.2in} {\bf end}   \\
\STATE{\bf end}  \\
\STATE{\bf Output:} $\Lambda$  \\
\end{algorithmic}\vspace{-.03in}\label{alg:fusioncenter}
\end{algorithm}

\section{ Divide-and-conquer algorithm and its distributed implementation}\label{DistributedOptimization.section}

Let  $G=(V,E)$ be a connected undirected graph,   $D_\lambda, \lambda\in \Lambda$, be a family of  governing vertex sets satisfying \eqref{governingvertices.def}, and $D_{\lambda, R}, \lambda\in \Lambda$ be their extended $R$-neighbors  satisfying \eqref{extendedneighbor.def}, see Section \ref{fusioncenter.subsection}. In this section, we introduce a novel DAC  algorithm \eqref{DACalgorithm.def}  to solve the  optimization problem \eqref{MainOptimizationProblem}, where we break down the global optimization problem \eqref{MainOptimizationProblem}
into  local minimization problems \eqref{DACalgorithm.defa} on overlapping extended $R$-neighbors  $D_{\lambda, R},\lambda\in \Lambda$, and then we combine the core part of solutions of the above local minimization problems to provide a better approximation to  the solution of the original minimization problem in each iteration, see \eqref{DACalgorithm.defb}. Due to  neighboring variable dependence of local objective functions $f_i, i\in V$,
we propose a scalable implementation of the DAC  algorithm \eqref{DACalgorithm.def} at fusion centers equipped with enough processing power and resources, see \Cref{Main.algorithm} for the implementation and \Cref{fusioncenter.assump} for the equipment requirement at fusion centers.

For a vector $\vx=(x(i))_{i\in V} \in \bR^{\# V}$ and a subset $W\subset V$,  we use $\chi_W: \bR^{\# V}\rightarrow \bR^{\# W}$ to denote the selection mapping $\chi_W \vx = (x(i))_{i\in W}$. Its adjoint mapping $\chi^*_W: \bR^{\# W}\rightarrow \bR^{\# V}$ is defined for a vector $\vu=(u(i))_{i\in W}$ as $\chi^*_W \vu \in \bR^{\# V}$ with the $i$-th component same as $u(i)$ when $i\in W$ and 0 otherwise. Moreover, we let $\mI_W = \chi^*_W \chi_W$ be the projection operator making the $i$-th block 0 for $i\notin W$.

In this paper, we propose the following iterative {\em divide-and-conquer algorithm}, DAC for abbreviation, to solve the  optimization problem \eqref{MainOptimizationProblem},
\begin{subequations}\label{DACalgorithm.def}
\begin{empheq}[left=\empheqlbrace]{align}
\vw_{\lambda}^n  &= \arg\min_{\vu \in \bR^{\# D_{\lambda, R}}}
 F(\chi^*_{D_{\lambda, R}}\vu+ \mI_{V\backslash D_{\lambda, R}} \vx^n), \ \lambda\in \Lambda, \label{DACalgorithm.defa} \\
 \vx^{n+1}  &=  \sum_{\lambda\in \Lambda}\mI_{D_{\lambda}}\chi^*_{D_{\lambda, R}} \vw_{\lambda}^n, \
\ n\ge 0,  \label{DACalgorithm.defb}
\end{empheq}
\end{subequations}
 where  an initial $\vx^0$ is  arbitrarily or randomly chosen. The iteration step in the DAC algorithm solves a family of the local minimization  problems \eqref{DACalgorithm.defa} on the overlapping extended $R$-neighbors ${D}_{\lambda,R}, \lambda\in \Lambda$, and combines the core part $D_\lambda$ of  solutions  $\vw_\lambda^n, \lambda\in \Lambda$ of those local minimization problems to provide a better approximation to the solution of the original minimization problem when  the radius parameter $R\ge 0$ is appropriately chosen, see \Cref{ConvergenceTheorem}.

For  a subset $W\subset V$ and $i\in V$,   we use $\rho(i, W)=\inf_{j\in W} \rho(i,j)$ to denote the distance between the vertex $i$ and the set $W$. Let $m\ge 1$ be the neighboring radius of the local objective functions $f_i, i\in V$, and define $l$-neighbors $D_{\lambda, R, l}$ of $D_{\lambda, R}, \lambda\in \Lambda$ by
\begin{equation*}
D_{\lambda, R, l}=\{i\in V,\ \rho(i, D_{\lambda, R})\le l\}, \ \lambda\in \Lambda.
\end{equation*}
For any $\vy\in {\mathbb R}^N$ and $\lambda\in \Lambda$, we obtain from  \Cref{localobjectivefunction.assump} that
\begin{eqnarray}\label{localminimization.def0}
 &&\arg\min_{\vu\in \bR^{\# D_{\lambda, R}}}
 F(\chi^*_{D_{\lambda, R}}\vu+ \mI_{V\backslash D_{\lambda, R}}\vy) \nonumber\\
    & \hskip-0.08in = &  \arg\min_{\vu\in \bR^{\# D_{\lambda, R}}} \Big(\sum_{i\in  D_{\lambda, R, m}}+ \sum_{i\not\in D_{\lambda, R, m}}\Big)
 f_i(\chi^*_{D_{\lambda, R}}\vu+ \mI_{V\backslash D_{\lambda, R}}\vy)\nonumber\\
    & \hskip-0.08in = &  \arg\min_{\vu\in \bR^{\# D_{\lambda, R}}} \sum_{i\in  D_{\lambda, R, m}}
 f_i(\chi^*_{D_{\lambda, R}}\vu+ \mI_{V\backslash D_{\lambda, R}}\vy)\nonumber\\
     & \hskip-0.08in = &  \arg\min_{\vu\in \bR^{\# D_{\lambda, R}}} \sum_{i\in  D_{\lambda, R, m}}
 f_i(\chi^*_{D_{\lambda, R}}\vu+ \mI_{D_{\lambda, R, 2m}\backslash D_{\lambda, R}}\vy)
\end{eqnarray}
Based on the above observation,  the iterative DAC algorithm \eqref{DACalgorithm.def}  can be implemented at fusion centers, see Algorithm \ref{Main.algorithm} for the implementation at  each fusion center $\lambda\in \Lambda$ where
\begin{equation}\label{neighboringfusioncenter.def}
\Lambda_{\lambda,  R,  2m}^{\rm out}=\{\lambda'\in \Lambda, D_{\lambda}\cap D_{\lambda', R, 2m}\ne \emptyset\} \ {\rm and} \
\Lambda_{\lambda, R,  2m}^{\rm in}=\{\lambda'\in \Lambda, D_{\lambda'}\cap D_{\lambda, R, 2m}\ne \emptyset\}, \lambda \in \Lambda.
\end{equation}
 We  consider any fusion center $\lambda'\in \Lambda_{\lambda,  R,  2m}^{\rm out}$ and $\Lambda_{\lambda,  R,  2m}^{\rm in}$ as an {\em out-neighbor} and {\em in-neighbor} of the fusion center $\lambda\in \Lambda$ respectively.
 For  $\lambda, \lambda'\in \Lambda$, one may verify that  $\lambda'\in \Lambda_{\lambda, R,2m}^{\rm out}$ if and only if $\lambda\in \Lambda_{\lambda', R,2m}^{\rm in}$
 and hence in Algorithm \eqref{DACalgorithm.def} the data vector  transmitted from a fusion center  to its out-neighbors will be received.

\begin{algorithm}[t] %[h!]  %[h]
\caption{Implementation of the  DAC algorithm  \eqref{DACalgorithm.def} at a fusion center $\lambda\in \Lambda$.}
\label{Main.algorithm}
\begin{algorithmic}  %[1]
\STATE{\bf Initialization}:  Maximal iteration number $T$, vertex sets $D_\lambda, D_{\lambda, R}$ and  $D_{q, R, 2m}$,
local objective functions $f_i, i\in D_{\lambda, R, m}$,  neighboring fusion  sets
$\Lambda_{\lambda, R, 2m}^{\rm out}$ and
$\Lambda_{\lambda, R, 2m}^{\rm in}$ in \eqref{neighboringfusioncenter.def},
 and initial guess $\chi_{D_{\lambda, R, 2m}}{ \bf x}^0$, i.e., its components $x_0(i), i\in D_{\lambda, R, 2m}$.
\STATE{\bf Iteration}:
\begin{itemize}
\item[{}] {\bf for} \ $n=0, 1, \ldots, T$
\item[{}]  Solve the local minimization problem
\begin{align*}
\vw_{\lambda}^n  = \arg\min_{\vu \in \bR^{\# D_{\lambda, R}}}
\sum_{i\in D_{\lambda, R, m}} f_{i}(\chi^*_{D_{\lambda, R}}\vu+\mI_{D_{\lambda, R, 2m}\backslash D_{\lambda, R}}\vx^{n})
\end{align*}

\item[{}] Send $\chi_{D_{\lambda}}\chi^*_{D_{\lambda, R}} \vw_{\lambda}^n$ to neighboring fusion centers $\lambda'\in \Lambda_{\lambda, R, 2m}^{\rm out}$;

\item[{}] Receive  $\chi_{D_{\lambda'}} \chi^*_{D_{\lambda', R}} \vw_{\lambda'}^n$ from neighboring fusion centers $\lambda'\in \Lambda_{\lambda, R, 2m}^{\rm in}$;

\item[{}] Evaluate $\chi_{D_{\lambda, R, 2m} \backslash D_{\lambda, R} } \vx^{n+1}  = %\chi_{D_{\lambda}} \vw_{\lambda}^n+
\chi_{D_{\lambda, R, 2m}\backslash D_{\lambda, R} } \sum_{\lambda'\in \Lambda_{\lambda, R,  2m}^{\rm in}}
\mI_{D_{\lambda'}}  \chi^*_{D_{\lambda',R}}\vw_{\lambda'}^n$.

\item[{}] {\bf end}
\end{itemize}

\STATE{\bf Output}: $\chi_{D_\lambda}\vx^{T+1}:=\chi_{D_{\lambda}} \chi^*_{D_{\lambda, R}}\vw_\lambda^{T}$
% $\chi_{D_{\lambda, R, 2m}} \vx^{M+1}$.
\end{algorithmic}\vspace{-.03in}
\end{algorithm}

For the implementation of Algorithm \ref{Main.algorithm}  at  fusion centers, it is required that each fusion center is  equipped with  enough memory for data storage, proper communication bandwidth to exchange data with its neighboring fusion centers, and high computing power to solve local minimization problems.

\begin{assumption}\label{fusioncenter.assump}
(i)\  Each fusion center $\lambda\in \Lambda$ can  store vertex sets $D_\lambda, D_{\lambda, R},  D_{\lambda, R,  m}$ and $D_{\lambda, R, 2m}$, neighboring fusion  sets $\Lambda_{\lambda, R, 2m}^{\rm out}$ and $\Lambda_{\lambda, R, 2m}^{\rm in}$, and the vectors  $\chi_{D_{\lambda, R, 2m} } \vx^{n}$  and %$\chi_{D_{\lambda, R}}
$\vw_\lambda^n$ at each iteration, and reserve enough memory used for storing local objective functions $f_i, i\in D_{\lambda, R, m}$ and solving the local minimization problem \eqref{localminimization.def0} at each iteration;

(ii)\ each fusion center $\lambda\in \Lambda$ has computing facility to solve the local minimization problem \eqref{localminimization.def0} quickly; and

(iii)\ each fusion center $\lambda\in \Lambda$ can send data $\chi_{D_\lambda} \chi^*_{D_\lambda, R} \vw_\lambda^n$ to fusion centers $\lambda' \in  \Lambda_{\lambda, R, 2m}^{\rm out}$ and receive data $\chi_{D_{\lambda'}} \chi^*_{D_{\lambda', R}} \vw_{\lambda'}^n$ from fusion centers $\lambda' \in  \Lambda_{\lambda, R, 2m}^{\rm in}$ at each iteration.
\end{assumption}

We finish this section with a remark on the above assumption when location set $\Lambda$ of fusion centers, the governing vertex sets $D_\lambda, \lambda\in \Lambda$
and the extended $R$-neighbors $D_{\lambda, R}, \lambda\in \Lambda$
are the maximal $R_0$-disjoint set,  its corresponding Voronoi diagram and
the set of all $R$-neighboring vertices of the Voronoi diagram
 respectively, see \Cref{fusioncenter.example}.
In this case, we have
$$D_\lambda\subset B(\lambda, 2R_0),\  D_{\lambda, R}\subset B(\lambda, 2R_0+R),  \ \
D_{\lambda, R, l}\subset B(\lambda, 2R_0+R+l)\ {\rm for}\ l=m, 2m,
$$
and
$$ \Lambda_{\lambda, R, 2m}^{\rm out} \subset \{\lambda'\in \Lambda, B(\lambda, 2R_0)\cap B(\lambda', 2R_0+R+2m)\ne \emptyset\}
\subset  B(\lambda, 4R_0+R+2m)\cap \Lambda , \ \lambda\in \Lambda,$$
$$ \Lambda_{\lambda, R, 2m}^{\rm in}\subset \{\lambda'\in \Lambda, B(\lambda', 2R_0)\cap B(\lambda, 2R_0+R+2m)\ne \emptyset\}
\subset  B(\lambda, 4R_0+R+2m)\cap \Lambda , \ \lambda\in \Lambda,$$
see Figure \ref{FusionCenters.fig}. This together with \Cref{graphstructure.assumption}
implies that
the local minimization problem of the form \eqref{localminimization.def0} is of size at most $D_1(\cG) ( 2R_0+R+1)^{d(\cG)}$,
each fusion center has  at most $D_1(\cG) ( 4R_0+R+2m+1)^{d(\cG)}$ neighboring fusion centers (usually it is much smaller), and in addition to memory requirement
to solve local minimization problem \eqref{localminimization.def0} of dimension at most $D_1(\cG) ( 2R_0+R+1)^{d(\cG)}$, each fusion center has memory of size
$D_1(\cG) ( 4R_0+R+2m+1)^{d(\cG)}$ to store some vertex sets and vectors.
From the convergence result in Theorem \ref{ConvergenceTheorem}, the parameter $R$ in
the DAC algorithm  can be chosen to depend only on the constants in \Cref{graphstructure.assumption}, \Cref{localobjectivefunction.assump} and \Cref{globalobjectivefunction.assump}, see \eqref{deltaR.def}.
Therefore  to meet  the requirements in \Cref{fusioncenter.assump},
in the above setting we could equip  data storage, communication devices and computing facility at each fusion center  independently on the order  $N$
of the underlying graph $\cG$,  and hence
 the distributed implementation of Algorithm \ref{Main.algorithm}  at  fusion centers is scalable.

\section{Exponential convergence of the divide-and-conquer algorithm}\label{convergence.section}

Let $\ell^p, 1\le p\le \infty$, be the space of all  $p$-summable sequences on the graph $\cG$ with its standard norm denoted by $\|\cdot\|_p$.
In  \Cref{ConvergenceTheorem} of  this section, we show that $\vx^n, n\ge 1$, in the proposed
iterative
DAC  algorithm
 \eqref{DACalgorithm.def} converges
  exponentially to  the solution $\hat \vx$
 of the  optimization problem \eqref{MainOptimizationProblem} in $\ell^p, 1\le p\le \infty$, when the parameter $R\ge 1$ in the extended neighbors of governing vertex sets
is chosen so that % $R\ge 2m+1$ and
%$$\delta_M=\frac{L^2 D_1(\cG)(2m+1)^{d(\cG)}(3M+2m+1)^{d(\cG)}}{c^2}e^{-\frac{M\theta}{2m}+\theta}<1,$$
\begin{equation}\label{deltaR.def}
\delta_R:=\frac{D_1(\cG) d(\cG)! L^2 (2m)^{d(\cG)} }{c(L-c)  |\ln (1-c/L)|^{d(\cG)}}   (R+2)^{d({\cG})}\Big(1-\frac{c}{L}\Big)^{R/(2m)}
%\frac{L ( D_1(\cG))^{2}  (2m+1)^{2d(\cG)+1 }  d(\cG)! }{c (\ln (L/(L-c)))^{d(\cG)-1}  }  \Big(1-\frac{c}{L}\Big)^{(R-2m-1)/(2m)} (R+1)^{d(\cG)}
%\frac{L^2  (D_1(\cG))^2  (2m+1)^{d(\cG) } }{(L-c)c}
% \sum_{k=R+1}^\infty   (k+1)^{d(\cG)} \Big(1-\frac{c}{L}\Big)^{k/(2m)}
 <1.\end{equation}
  In many practical applications, it could be numerically expensive to solve the local minimization problem
\eqref{localminimization.def0}  exactly.
In Theorem \ref{inexactConvergenceTheorem}, we provide an error estimate when  the local minimization problems
in the DAC algorithm are solved up to certain bounded accuracy.

\begin{theorem}\label{ConvergenceTheorem}
Let the underlying graph  $\cG=(V, E)$, the local objective functions $f_i, i\in V$
and the global objective function $F$ of the
  optimization problem \eqref{MainOptimizationProblem}
  satisfy Assumptions \ref{graphstructure.assumption}, \ref{localobjectivefunction.assump} and \ref{globalobjectivefunction.assump} respectively,
  $\hat \vx={\rm arg} \min_{\vx\in {\mathbb R}^N}F(\vx)$ be the unique solution   of the global minimization problem  \eqref{MainOptimizationProblem},
  and $\{\vx^n, n\ge 0\}$ be the sequence generated in the iterative DAC algorithm \eqref{DACalgorithm.def}.
If the parameter $R\ge 1$ in \eqref{DACalgorithm.def} is chosen to satisfy \eqref{deltaR.def},
 then $\{\vx^n, n\ge 0\}$ converges to
 $\hat \vx\in {\mathbb R}^N$ exponentially in $\ell^p, 1\le p\le \infty$, with convergence rate $\delta_R$,
\begin{equation}\label{ConvergenceTheorem.eq4}
\|\vx^n-\hat \vx\|_{p}\le \left(\delta_{R}\right)^n \|\vx^0-\hat\vx\|_{p},\  n\ge 0. % + \sum_{j=1}^n \epsilon_j \delta_R^{n-j}, \ n\ge 0.
\end{equation}
 \end{theorem}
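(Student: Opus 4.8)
The plan is to recognize $\hat\vx$ as a fixed point of the DAC update and to show that, with $R$ chosen as in \eqref{deltaR.def}, the update contracts the error in $\ell^p$ by the factor $\delta_R$; then \eqref{ConvergenceTheorem.eq4} follows by induction on $n$. For the fixed-point claim, I would substitute $\vx^n=\hat\vx$ into \eqref{DACalgorithm.defa}: the objective $\vu\mapsto F(\chi^*_{D_{\lambda,R}}\vu+\mI_{V\backslash D_{\lambda,R}}\hat\vx)$ is strongly convex, so its unique minimizer is pinned down by the stationarity condition $\chi_{D_{\lambda,R}}\nabla F(\chi^*_{D_{\lambda,R}}\vw_\lambda^n+\mI_{V\backslash D_{\lambda,R}}\hat\vx)=0$, which is met by $\vw_\lambda^n=\chi_{D_{\lambda,R}}\hat\vx$ since $\nabla F(\hat\vx)=0$; plugging into \eqref{DACalgorithm.defb} and using $D_\lambda\subset D_{\lambda,R}$ together with the partition property \eqref{governingvertices.def} gives $\vx^{n+1}=\sum_\lambda\mI_{D_\lambda}\hat\vx=\hat\vx$.

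For a general iterate, set $\vy_\lambda^n=\chi^*_{D_{\lambda,R}}\vw_\lambda^n+\mI_{V\backslash D_{\lambda,R}}\vx^n$. The stationarity condition for $\vw_\lambda^n$, the identity $\chi_{D_{\lambda,R}}\nabla F(\hat\vx)=0$, and the factorization \eqref{GlobalHessianBounds.eq1} with $\mPhi_\lambda:=\mPhi(\vy_\lambda^n,\hat\vx)$ give $\chi_{D_{\lambda,R}}\mPhi_\lambda(\vy_\lambda^n-\hat\vx)=0$. Writing $A_\lambda=\chi_{D_{\lambda,R}}\mPhi_\lambda\chi^*_{D_{\lambda,R}}$, which satisfies $c\mI\preceq A_\lambda\preceq L\mI$ by \eqref{GlobalHessianBounds.eq2} and is hence invertible, and noting $\mI_{V\backslash D_{\lambda,R}}(\vy_\lambda^n-\hat\vx)=\mI_{V\backslash D_{\lambda,R}}(\vx^n-\hat\vx)$, I would solve for the $D_{\lambda,R}$-block of $\vy_\lambda^n-\hat\vx$ and substitute into \eqref{DACalgorithm.defb} (again using the partition property) to obtain the linear error recursion $\vx^{n+1}-\hat\vx=T_n(\vx^n-\hat\vx)$ with $T_n=-\sum_{\lambda\in\Lambda}\mI_{D_\lambda}\chi^*_{D_{\lambda,R}}A_\lambda^{-1}\chi_{D_{\lambda,R}}\mPhi_\lambda\mI_{V\backslash D_{\lambda,R}}$. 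It then suffices to prove $\|T_n\|_{\ell^p\to\ell^p}\le\delta_R$ for every $n$ and every $p\in[1,\infty]$.

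Two localization facts drive the norm estimate. Since $\mPhi_\lambda$ has geodesic-width $2m$, the ``leakage'' factor $\chi_{D_{\lambda,R}}\mPhi_\lambda\mI_{V\backslash D_{\lambda,R}}$ has entries supported on pairs $(k,j)$ with $k\in D_{\lambda,R}$, $j\in V\backslash D_{\lambda,R}$, $\rho(k,j)\le 2m$ and modulus at most $L$; combined with $i\in D_\lambda$ and the separation $\rho(D_\lambda,V\backslash D_{\lambda,R})>R$ from \eqref{extendedneighbor.def}, any surviving pair $(i,k)$ obeys $\rho(i,k)>R-2m$, so $A_\lambda^{-1}$ must span the gap. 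And $A_\lambda^{-1}$ has exponential off-diagonal decay: writing $A_\lambda^{-1}=L^{-1}\sum_{t\ge0}(\mI-L^{-1}A_\lambda)^t$ with $\|\mI-L^{-1}A_\lambda\|_{2\to2}\le 1-c/L$ and noting that $(\mI-L^{-1}A_\lambda)^t$ has geodesic-width at most $2mt$, one gets $|A_\lambda^{-1}(i,k)|\le c^{-1}(1-c/L)^{\lceil\rho(i,k)/(2m)\rceil}$ for $i,k\in D_{\lambda,R}$. I would then bound $\|T_n\|_{\ell^p\to\ell^p}$ by a Schur test, controlling the row sums (the $\ell^\infty\to\ell^\infty$ norm) and column sums (the $\ell^1\to\ell^1$ norm) by one common quantity, which interpolates to all $p$. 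Because $\{D_\lambda\}$ partitions $V$, for a fixed row $i$ only the term with $i\in D_\lambda$ survives, while for a fixed column $j$ the sum over $\lambda$ collapses to a single sum over $i\in V$; in both cases one is left with $c^{-1}L\,D_1(\cG)(2m+1)^{d(\cG)}$ times the tail sum $\sum_{r>R-2m}D_1(\cG)(r+1)^{d(\cG)}(1-c/L)^{r/(2m)}$, where \eqref{PolynomialGrowth} has been used both to size a $2m$-neighborhood and to count the vertices at distance $r$.

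The remaining step is to evaluate this polynomial-times-geometric tail sum, for instance by comparison with $\int x^{d(\cG)}e^{-x|\ln(1-c/L)|/(2m)}\,dx$ and an incomplete-Gamma estimate — which is where the factors $d(\cG)!$, $(2m)^{d(\cG)}$, $|\ln(1-c/L)|^{-d(\cG)}$ and the polynomial prefactor $(R+2)^{d(\cG)}$ arise — and then to collect all constants, yielding $\|T_n\|_{\ell^p\to\ell^p}\le\delta_R$, hence $\|\vx^{n+1}-\hat\vx\|_p\le\delta_R\|\vx^n-\hat\vx\|_p$ and \eqref{ConvergenceTheorem.eq4}. I expect the structure of the argument to be straightforward and the main technical work to be matching the displayed coefficient in \eqref{deltaR.def} exactly in the tail-sum estimate, together with confirming that a single Schur bound serves all $p\in[1,\infty]$ simultaneously.
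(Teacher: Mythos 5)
Your proposal follows essentially the same route as the paper: derive the linear error recursion $\vx^{n+1}-\hat\vx=T_n(\vx^n-\hat\vx)$ from the stationarity of the local minimizers together with \eqref{GlobalHessianBounds.eq1}, then bound $T_n$ in the Schur norm (which dominates all $\|\cdot\|_{{\mathcal B}_p}$, cf.\ \eqref{NormInequality}) using exponential off-diagonal decay of the inverse of a banded positive-definite matrix and a polynomial-times-geometric tail sum. Your derivation of the recursion is in fact slightly cleaner than the paper's: you apply Assumption~\ref{globalobjectivefunction.assump} once to the pair $(\vy_\lambda^n,\hat\vx)$, whereas the paper passes through the intermediate point $\mI_{D_{\lambda,R}}\hat\vx+\mI_{V\backslash D_{\lambda,R}}\vx^n$ and uses two different matrices ${\mPhi}$; the resulting operators agree in structure and satisfy the same width and spectral bounds. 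The one substantive discrepancy is in the norm estimate: you bound the entries of the product $A_\lambda^{-1}\bigl(\chi_{D_{\lambda,R}}{\mPhi}_\lambda\mI_{V\backslash D_{\lambda,R}}\bigr)$ by composing an entrywise bound on $A_\lambda^{-1}$ with an entrywise bound on the leakage factor and counting the intermediate indices $k$ with $\rho(k,j)\le 2m$. This costs an extra factor $D_1(\cG)(2m+1)^{d(\cG)}$ (and shifts the tail sum to $r>R-2m$), so as written you obtain a contraction factor of roughly $\delta_R\cdot D_1(\cG)(2m+1)^{d(\cG)}$ rather than the $\delta_R$ of \eqref{deltaR.def}; since the theorem's hypothesis is only $\delta_R<1$, this does not yet close the argument. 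The fix is the paper's Lemma~\ref{Offdiagonaldecay.lemma}: estimate the entries of ${\mA}^{-1}{\mB}$ directly from the Neumann series via $|({\bf D}^n{\bf B})(i,j)|\le\|{\bf D}^n{\bf B}\|_{{\mathcal B}_2}\le(1-c/L)^nL$, which yields $|({\mA}^{-1}{\mB})(i,j)|\le\tfrac{L^2}{c(L-c)}(1-c/L)^{\rho(i,j)/(2m)}$ with no neighborhood-counting factor, after which the tail-sum lemma over $\rho(i,j)>R$ reproduces \eqref{deltaR.def} exactly. Everything else in your outline is sound.
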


For a matrix $\mA=(a(i,j))_{i,j\in V}$ on the graph $\cG=(V, E)$, define its entrywise bound by
 $\|{\mA}\|_{\infty}=\sup_{i,j\in V}|a(i,j)|$, its operator norm  on $\ell^p, 1\le p\le \infty$, by
$$\|{\mA}\|_{{\mathcal B}_p}=\sup_{\|\vx\|_p=1} \|{\mA}\vx\|_p,$$
and its Schur norm
 by
 $$\|{\mA}\|_{\mathcal{S}}=\max\Big(\sup_{i\in V} \sum_{j\in V} |a(i,j)|, \sup_{j\in V} \sum_{i\in V} |a(i,j)|\Big)=\max_{1\le p\le\infty}
 \|{\mA}\|_{{\mathcal B}_p}.$$
 As shown in \cite[Prop. III.3]{Jiang19}, a bounded matrix $\mA$ with limited geodesic-width $\omega({\mA})$ is a bounded operator on $\ell^p, 1\le p\le \infty$.
\begin{equation}\label{NormInequality}
\|{\mA}\|_{\infty}\le \|{\mA}\|_{\mathcal{B}_p}\le \|{\mA}\|_{\mathcal{S}}\le D_1(\cG)(\omega({\mA})+1)^{d(\cG)}\|{\mA}\|_{\infty}.
\end{equation}
The crucial step in the proof of Theorem \ref{ConvergenceTheorem} is to find matrices
${\bf H}_n, n\ge 0$, on the graph $\cG$ such that
\begin{equation}\label{maintheorem.pf-1}
\vx^{n+1}-\hat \vx= {\bf H}_n (\vx^n-\hat \vx)\ \ {\rm and} \ \ \|{\bf H}_n\|_{\mathcal S}\le \delta_R, \ n\ge 0,\end{equation}
 see
\eqref{ConvergenceTheorem.pfeq7} and \eqref{ConvergenceTheorem.pfeq17}.
The detailed argument of  Theorem \ref{ConvergenceTheorem}  will be given in  \Cref{convergencetheorem.proofsection}.

\smallskip
It is direct to observe from
 \Cref{globalobjectivefunction.assump} that the local optimizer $\vw^n_\lambda$  in
\eqref{DACalgorithm.defa}
satisfies
  \begin{equation}\label{ConvergenceTheorem.pfeq0}
 \chi_{D_{\lambda, R}} \nabla F(\chi^*_{D_{\lambda,R}}\vw_{\lambda}^n+\mI_{V\backslash D_{\lambda,R}} \vx^{n})={\bf 0}, \ \lambda\in \Lambda.
\end{equation}
This motivates us to consider the following inexact DAC algorithm  starting
from an initial $\tilde \vx^0$ arbitrarily or randomly chosen,
and solving a family of the local minimization  problems
with bounded accuracy $\epsilon_n, n\ge 0$,
\begin{subequations}\label{inexactDACalgorithm.def}
\begin{equation}\label{inexactDACalgorithm.defa}
\tilde \vw_{\lambda}^n \ \ \text{is selected such that}  \ \
\| \chi_{D_{\lambda, R}}\nabla F(\chi^*_{D_{\lambda, R}}\tilde \vw_{\lambda}^n + \mI_{V\backslash D_{\lambda, R}}\tilde \vx^n)\|_\infty \le \epsilon_n,\ \lambda\in \Lambda,
\end{equation}
%\vskip-0.18in
and  combining the core part of the above inexact solutions
\begin{equation}\label{inexactDACalgorithm.defb}
\tilde \vx^{n+1}  =  \sum_{\lambda\in \Lambda} \mI_{D_{\lambda}}\chi^*_{D_{\lambda, R}} \tilde \vw_{\lambda}^n, \
\ n\ge 0,
\end{equation}
\end{subequations}
to  provide a novel approximation.
Shown in the following theorem is the error estimate $\|\tilde \vx^n-\hat \vx\|_\infty, n\ge 0$.

\begin{theorem}\label{inexactConvergenceTheorem}
Let the underlying graph  $\cG=(V, E)$,  the global objective function $F$,
the optimal point $\hat \vx$,
the parameters $R$, and the convergence rate  $\delta_R$ be as in Theorem \ref{ConvergenceTheorem},
and  $\tilde \vx^n, n\ge 1$, be the solutions of the inexact DAC algorithm \eqref{inexactDACalgorithm.def} with bounded accuracy $\epsilon_n, n\ge 0$.
Then  %there exist a positive constant $C$ such that
\begin{equation}\label{inexactConvergenceTheorem.eq1}
\|\tilde \vx^n-\hat\vx\|_{\infty}\le (\delta_{R})^n \|\vx^0-\hat\vx\|_{p}
+\frac{L-c}{L^2} \sum_{m=0}^{n-1} (\delta_{R})^{n-m} \epsilon_{m},\  n\ge 1. % + \sum_{j=1}^n \epsilon_j \delta_R^{n-j}, \ n\ge 0.
\end{equation}
 \end{theorem}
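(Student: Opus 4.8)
The plan is to mirror the ``contraction plus forcing term'' structure used for Theorem~\ref{ConvergenceTheorem}, carrying along the extra vector produced by the inexact local solves. First I would rewrite one step of the inexact iteration as an affine recursion centered at $\hat\vx$. Set $\tilde\vz_\lambda^n:=\chi^*_{D_{\lambda,R}}\tilde\vw_\lambda^n+\mI_{V\setminus D_{\lambda,R}}\tilde\vx^n$ and $\vr_\lambda^n:=\chi_{D_{\lambda,R}}\nabla F(\tilde\vz_\lambda^n)$, so that \eqref{inexactDACalgorithm.defa} reads $\|\vr_\lambda^n\|_\infty\le\epsilon_n$. Since $\nabla F(\hat\vx)={\bf 0}$, Assumption~\ref{globalobjectivefunction.assump} gives $\nabla F(\tilde\vz_\lambda^n)={\mPhi}(\tilde\vz_\lambda^n,\hat\vx)(\tilde\vz_\lambda^n-\hat\vx)$; splitting $\tilde\vz_\lambda^n-\hat\vx=\chi^*_{D_{\lambda,R}}(\tilde\vw_\lambda^n-\chi_{D_{\lambda,R}}\hat\vx)+\mI_{V\setminus D_{\lambda,R}}(\tilde\vx^n-\hat\vx)$ and inverting the restricted matrix ${\mA}_\lambda^n:=\chi_{D_{\lambda,R}}{\mPhi}(\tilde\vz_\lambda^n,\hat\vx)\chi^*_{D_{\lambda,R}}$, which is invertible because $c{\mI}\preceq{\mA}_\lambda^n\preceq L{\mI}$, I would solve for $\tilde\vw_\lambda^n-\chi_{D_{\lambda,R}}\hat\vx$. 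Combining the core parts via \eqref{inexactDACalgorithm.defb}, using $\sum_{\lambda}\mI_{D_\lambda}=\mI$ and $\mI_{D_\lambda}\mI_{D_{\lambda,R}}=\mI_{D_\lambda}$, yields
\[
\tilde\vx^{n+1}-\hat\vx={\mH}_n(\tilde\vx^n-\hat\vx)+{\bf e}_n,\qquad {\bf e}_n:=\sum_{\lambda\in\Lambda}\mI_{D_\lambda}\chi^*_{D_{\lambda,R}}({\mA}_\lambda^n)^{-1}\vr_\lambda^n,
\]
where ${\mH}_n$ has exactly the algebraic form of the matrix in \eqref{maintheorem.pf-1}, with ${\mPhi}$ evaluated at $(\tilde\vz_\lambda^n,\hat\vx)$.

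Second, I would observe that the estimate $\|{\mH}_n\|_{\mathcal S}\le\delta_R$ proved for Theorem~\ref{ConvergenceTheorem} uses only the geodesic-width bound $\omega({\mPhi})\le 2m$, the ellipticity $c{\mI}\preceq{\mPhi}\preceq L{\mI}$, the geometric separation $\rho(D_\lambda,V\setminus D_{\lambda,R})>R$, and the norm comparison \eqref{NormInequality}; none of this is affected by inexactness, so the same argument applies verbatim and gives $\|{\mH}_n\|_{\mathcal S}\le\delta_R$, hence $\|{\mH}_n\|_{{\mathcal B}_\infty}\le\delta_R$.

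Third — the genuinely new estimate — I would bound $\|{\bf e}_n\|_\infty$. Because the governing sets $D_\lambda$, $\lambda\in\Lambda$, are pairwise disjoint, at any vertex exactly one summand of ${\bf e}_n$ is nonzero, so $\|{\bf e}_n\|_\infty\le\max_{\lambda}\|({\mA}_\lambda^n)^{-1}\vr_\lambda^n\|_\infty\le\max_{\lambda}\|({\mA}_\lambda^n)^{-1}\|_{{\mathcal B}_\infty}\,\epsilon_n$. The matrix ${\mA}_\lambda^n$ is banded with width $\le 2m$ and satisfies $c{\mI}\preceq{\mA}_\lambda^n\preceq L{\mI}$, so from the Neumann expansion $({\mA}_\lambda^n)^{-1}=L^{-1}\sum_{k\ge0}({\mI}-L^{-1}{\mA}_\lambda^n)^k$ together with $\|{\mI}-L^{-1}{\mA}_\lambda^n\|_{{\mathcal B}_2}\le 1-c/L$ and \eqref{NormInequality} applied to each $k$-th power (of width $\le 2mk$), one gets a geometric, dimension-free bound; isolating the leading term produces the constant $(L-c)/L^{2}$ in \eqref{inexactConvergenceTheorem.eq1}. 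Finally, unrolling the recursion from the first step,
\[
\tilde\vx^n-\hat\vx=\Big(\prod_{k=0}^{n-1}{\mH}_{n-1-k}\Big)(\tilde\vx^0-\hat\vx)+\sum_{m=0}^{n-1}\Big(\prod_{j=m+1}^{n-1}{\mH}_j\Big){\bf e}_m,
\]
(empty products being ${\mI}$), taking $\ell^\infty$ norms and using $\|{\mH}_j\|_{{\mathcal B}_\infty}\le\delta_R$, $\|\tilde\vx^0-\hat\vx\|_\infty\le\|\vx^0-\hat\vx\|_p$ with $\tilde\vx^0=\vx^0$, and the bound on $\|{\bf e}_m\|_\infty$, I recover \eqref{inexactConvergenceTheorem.eq1}.

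The main obstacle is the third step: obtaining the sharp $\ell^\infty$ estimate for the forcing term ${\bf e}_n$ with the correct constant, which rests on a careful off-diagonal-decay/Neumann-series bound for the inverse of the uniformly elliptic banded restricted Hessian ${\mA}_\lambda^n$ (and on using the partition property of the $D_\lambda$ to avoid any loss from summing over fusion centers). Once that estimate and the inherited contraction bound $\|{\mH}_n\|_{\mathcal S}\le\delta_R$ are in hand, the affine recursion and the geometric-series unrolling are routine.
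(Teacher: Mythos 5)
Your architecture is the paper's own: the same affine recursion $\tilde\vx^{n+1}-\hat\vx=\tilde{\mathbf H}_n(\tilde\vx^n-\hat\vx)+\mathbf e_n$ as in \eqref{inexactConvergenceTheorem.pfeq5} (your restricted matrix $\mathbf A_\lambda^n$ is just the paper's padded matrix $\tilde{\bf\Phi}_{\lambda,R,n}$ without the harmless $\frac{L+c}{2}\mI_{V\backslash D_{\lambda,R}}$ block), the same verbatim reuse of $\|\tilde{\mathbf H}_n\|_{\mathcal S}\le\delta_R$, the same appeal to Lemma \ref{Offdiagonaldecay.lemma} for the inverse of the restricted Hessian, and the same geometric unrolling. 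So the route is right and matches the paper step for step.

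The gap is exactly where you say the main obstacle lies, and it does not close as described. Your forcing-term bound is $\|\mathbf e_n\|_\infty\le\max_{\lambda}\|(\mathbf A_\lambda^n)^{-1}\|_{{\mathcal B}_\infty}\,\epsilon_n$, and you assert that the Neumann series, ``isolating the leading term,'' yields the constant $(L-c)/L^2$ (which, after unrolling, must in fact appear as $\frac{L-c}{L^2}\delta_R$ per step to match \eqref{inexactConvergenceTheorem.eq1}). But any row-sum bound built from the entrywise estimate $|G_1(i,j)|\le c^{-1}(1-c/L)^{\rho(i,j)/(2m)}$ is at least $1/c$ because of the $j=i$ term, and even the exact norm satisfies $\|(\mathbf A_\lambda^n)^{-1}\|_{{\mathcal B}_\infty}\ge 1/L$ since the diagonal entries of the inverse of a matrix $\preceq L\mI$ are $\ge 1/L$; meanwhile $\frac{L-c}{L^2}\delta_R<\frac1L$ whenever $\delta_R<1$. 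Unlike the contraction term, where $i\in D_\lambda$ and $j\in V\backslash D_{\lambda,R}$ force $\rho(i,j)>R$ and hence the gain $\delta_R$, here $j$ ranges over all of $D_{\lambda,R}\ni i$, so there is no distance gap to exploit. What your argument actually delivers is $\|\mathbf e_n\|_\infty\le C_0\epsilon_n$ with $C_0$ of order $c^{-1}\sup_{i}\sum_{j}(1-c/L)^{\rho(i,j)/(2m)}$, hence \eqref{inexactConvergenceTheorem.eq1} with $\frac{L-c}{L^2}\delta_R$ replaced by a constant depending only on $c,L,m,D_1(\cG),d(\cG)$ — enough for the corollary, but not the literal statement. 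For what it is worth, the paper's own passage to $\frac{L-c}{L^2}\delta_R\epsilon_n$ in \eqref{inexactConvergenceTheorem.pfeq8} suffers the same defect (its middle expression is already $\ge\epsilon_n/c$), so you should either supply a genuinely sharper bound on $\|(\mathbf A_\lambda^n)^{-1}\|_{{\mathcal B}_\infty}$ or restate the estimate with an honest constant.
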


 The crucial step in the proof of Theorem \ref{inexactConvergenceTheorem}
 is to  find matrices $\tilde {\bf H}_n, n\ge 0$,  such that
\begin{equation*}
\|\tilde {\bf H}_n\|_{\mathcal S}\le \delta_R\ \  {\rm and}\ \
\|\vx^{n+1}-\hat \vx-\tilde {\bf H}_n (\vx^n-\hat \vx)\|_\infty\le  \frac{L-c}{L^2} \delta_R \epsilon_n, \ n\ge 0,\end{equation*}
similar to
\eqref{maintheorem.pf-1} where $\epsilon_n=0$ for all $n\ge 0$. The detailed argument of Theorem
\ref{inexactConvergenceTheorem} is given in \Cref{inexactConvergenceTheorem.proofsection}.

\smallskip
By Theorem \ref{inexactConvergenceTheorem}, we conclude that
the solutions of the inexact DAC algorithm \eqref{inexactDACalgorithm.def} converges to
the true  optimal point when the accuracy bounds $\epsilon_n, n\ge 0$, have zero limit.

\begin{corollary}
Let the underlying graph  $\cG=(V, E)$,  the global objective function $F$,
the optimal point $\hat \vx$,
the parameters $R$, and the convergence rate  $\delta_R$,
and the inexact solutions
$\tilde \vx^n$  and accuracy  bounds $\epsilon_n, n\ge 0$, be as in Theorem \ref{inexactConvergenceTheorem}.
If $\lim_{n\to \infty} \epsilon_n=0$, then
$\lim_{n\to \infty} \tilde \vx^n= \hat \vx$.
 \end{corollary}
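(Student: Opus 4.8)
The plan is to deduce the conclusion directly from the quantitative error bound of Theorem \ref{inexactConvergenceTheorem}. By \eqref{inexactConvergenceTheorem.eq1}, for every $n\ge 1$ we have
\[
\|\tilde \vx^n-\hat\vx\|_{\infty}\le (\delta_{R})^n \|\vx^0-\hat\vx\|_{p}
+\frac{L-c}{L^2} \sum_{m=0}^{n-1} (\delta_{R})^{n-m} \epsilon_{m},
\]
so it suffices to show that both terms on the right-hand side tend to $0$ as $n\to\infty$. Since $R$ is chosen to satisfy \eqref{deltaR.def}, we have $0<\delta_R<1$; hence the first term $(\delta_R)^n\|\vx^0-\hat\vx\|_p$ decays geometrically to $0$, and it remains only to handle the convolution sum $S_n:=\sum_{m=0}^{n-1}(\delta_R)^{n-m}\epsilon_m$.

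For this I would use the standard fact that a discrete convolution of a decaying geometric kernel with a null sequence is null: if $\epsilon_m\ge 0$, $\epsilon_m\to 0$, and $0<\delta_R<1$, then $S_n\to 0$. The argument is an $\varepsilon$–splitting. Given $\varepsilon>0$, first pick $M$ so that $\epsilon_m\le\varepsilon$ for all $m\ge M$. Then split $S_n=\sum_{m=0}^{M-1}(\delta_R)^{n-m}\epsilon_m+\sum_{m=M}^{n-1}(\delta_R)^{n-m}\epsilon_m$. The tail sum is at most $\varepsilon\sum_{k\ge 1}(\delta_R)^k=\varepsilon\,\delta_R/(1-\delta_R)$, uniformly in $n$. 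For the head sum, each exponent satisfies $n-m\ge n-M+1$, so it is at most $(\delta_R)^{n-M+1}\sum_{m=0}^{M-1}\epsilon_m$, which (a fixed finite sum times a geometric factor) tends to $0$ as $n\to\infty$. Therefore $\limsup_{n\to\infty}S_n\le \varepsilon\,\delta_R/(1-\delta_R)$, and letting $\varepsilon\downarrow 0$ gives $S_n\to 0$. Multiplying by the constant $(L-c)/L^2$ and combining with the geometric decay of the first term yields $\|\tilde\vx^n-\hat\vx\|_\infty\to 0$, i.e. $\lim_{n\to\infty}\tilde\vx^n=\hat\vx$.

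There is essentially no serious obstacle: once Theorem \ref{inexactConvergenceTheorem} is available the statement is a routine consequence. The only mildly technical point is the convolution estimate for the doubly-indexed array $(\delta_R)^{n-m}\epsilon_m$, which can equivalently be phrased as a dominated-convergence or Abel-summation argument; I would either isolate it as a one-line lemma or carry it out inline as above. One should note in passing that a convergent sequence $(\epsilon_m)$ is bounded, so no integrability issue arises in the uniform-in-$n$ tail estimate.
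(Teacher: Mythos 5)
Your proof is correct and is precisely the routine argument the paper has in mind: the paper states this corollary without proof, as an immediate consequence of the bound \eqref{inexactConvergenceTheorem.eq1}, and your $\varepsilon$-splitting of the convolution sum $\sum_{m=0}^{n-1}(\delta_R)^{n-m}\epsilon_m$ into a head piece (finitely many terms killed by the geometric factor $(\delta_R)^{n-M+1}$) and a tail piece (bounded by $\varepsilon\,\delta_R/(1-\delta_R)$ uniformly in $n$) is the standard and correct way to fill in the omitted details. The only thing worth double-checking, which you implicitly use, is that $0<\delta_R<1$; this holds since $\delta_R$ is a product of positive factors and the hypothesis \eqref{deltaR.def} gives the upper bound.
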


\section{Numerical Examples}\label{NumericExamples}

In this section, we demonstrate the performance of the iterative DAC for the least squares minimization problem  and the LASSO model, and  we make a comparison with the performance of some popular decentralized optimization methods, including decentralized gradient descent (DGD) \cite{Nedic2009,Matei2011,Yuan2016a}, Diffusion \cite{Cattivelli2010,Cattivelli2007}, exact first-order algorithm
(EXTRA) \cite{Shi15}, proximal gradient exact first-order algorithm (PG-EXTRA) \cite{Shi2015b}, and  network-independent step-size algorithm
(NIDS) \cite{Li2019a}. The numerical experiments show that, comparing with DGD, Diffusion, EXTRA, PD-EXTRA and NIDS,
 the proposed DAC method has superior performance for the
 least squares problem with/without $\ell^1$-penalty and has much faster  convergence. Moreover, the computational cost of the proposed DAC is almost linear with respect to the graph size, and hence  it has a great potential in scalability to work with extremely large networks.

In all the experiments below,  the underlying graph of our distributed optimization problems
is the  random geometric graph ${\mathcal G}_N=(V_N, E_N)$ of order $N\ge 256$,  where vertices are randomly deployed in the unit square $[0, 1]^2$ and  an edge between two vertices exist if their Euclidean distance is not larger than $\sqrt{2/N}$, see Figure \ref{FusionCenters.fig}.
On the random geometric  graph $\cG_N$, we denote its
 adjacent matrix, degree matrix and
 symmetric normalized Laplacian matrix by $\mA, {\bf D}$ and $\mL^{\rm{sym}}={\bf I}-{\bf D}^{-1/2} {\bf A}{\bf D}^{-1/2}$ respectively.

 All the numerical experiments are implemented using Python 3.8 on a computer server with Intel(R) Xeon(R) Gold 6148 CPU  2.4GHz and 32G memory.

\subsection{Least Squares Problem}\label{LeastSquare.Problem}

Consider the following least square problem:
\begin{equation}\label{eq:least_square}
\hat{\vx}=\arg\min_{\vx\in \bR^N} \left\{ F(\vx)=\frac{1}{2} \|\mH \vx-\vb\|_2^2\right\}
\end{equation}
where $\mH = \mI + 5 \mL^{\rm{sym}}$ and $\vb=(b(i))_{i\in V}\in \bR^N$ is randomly generated from normal distribution with mean $0$ and variance $1$.
Write ${\bf H}=(H(i,j))_{i,j\in V_N}$ and define
\begin{align*}
f_i(\vx) = \frac{1}{2} \biggl(\sum_{j\in V_N}H(i,j)x(j) - b(i) \biggr)^2, \quad i\in V_N
\end{align*}
for $\vx=(x(i))_{i\in V_N}$.
As the symmetric normalized Laplacian matrix
${\bf L}^{\rm sym}$ has geodesic-width one and satisfies ${\bf 0} \preceq {\bf L}^{\rm sym} \preceq 2{\bf I}$,
Assumptions \ref{localobjectivefunction.assump} and \ref{globalobjectivefunction.assump} are satisfied with
\begin{equation*} m=1,\  c=1,\  L=121 \ {\rm and} \ {\bf \Phi}(\vx, \vy)={\bf H}^2 \ {\rm for} \ \vx, \vy\in {\mathbb R}^N.
\end{equation*}
To implement the  DAC algorithm \eqref{DACalgorithm.def}, we   start from applying \Cref{alg:fusioncenter} to find  a maximal $R_0$-disjoint set with $R_0=1$,
use the maximal $R_0$-disjoint set and its corresponding Voronoi diagram as the location set  $\Lambda$ of fusion centers and  the family of  governing vertex sets $D_\lambda, \lambda\in \Lambda$.  Next we select $R=3$ and use the set of all $R$-neighbors of $D_{\lambda}$ as the extended $R$-neighbors  $D_{\lambda, R}, \lambda\in \Lambda$.
Having the fusion centers selected and extended $R$-neighbors ready, we then follow \Cref{Main.algorithm} to implement the  DAC algorithm \eqref{DACalgorithm.def} with zero  initial $\vx^0={\bf 0}$  and use ${\|\vx^{n+1} - \vx^n\|_2}/{\|\vx^n\|_2}\le 10^{-14}$ as the stopping criterion.

We will compare the performance of our proposed algorithm with DGD,  Diffusion, and EXTRA. In this regard, we will use the partition $\{D_\lambda: \Lambda\in \Lambda\}$ as the nodes in DGD. i.e.,  each block $D_\lambda$ of vertices would be treated as a single node in DGD and two nodes $D_\lambda$ and $D_{\lambda'}$ are connected if the two fusion centers $\lambda$ and $\lambda'$ are within distance $2(R+R_0)=8$, i.e.,  $\rho(\lambda, \lambda')\leq 8$. We define the objective function at each fusion center:
\begin{align*}
F_\lambda(\vx)=\sum_{i\in D_\lambda}f_i(\vx) = \frac{1}{2}\|\mH_{D_\lambda}\vx - \vb_{D_\lambda}\|^2 , \quad \lambda\in \Lambda,
\end{align*}
where $\mH_{D_\lambda} = (H(i,j))_{i\in D_\lambda, j\in V_N}$ and $\vb_{D_\lambda}=(b_i)_{i\in D_\lambda}$.
Denote   the stacked local copies $\vx_\lambda$  and  the stacked local gradients $\nabla F_\lambda(\vx_\lambda)$
held at each fusion center $\lambda\in \Lambda$ by
$\mX$ and $\nabla F(\mX)$ respectively, i.e.,
\begin{align*}
\mX= \begin{pmatrix}
(\vx_1)^T\\
\vdots \\
(\vx_{\# \Lambda})^T
\end{pmatrix}
\quad
\mbox{and}
\quad
\nabla F(\mX)=
\begin{pmatrix}
\nabla^T F_1(\vx_1)\\
\vdots\\
\nabla^T F_{\# \Lambda}(\vx_{\# \Lambda})
\end{pmatrix}.
\end{align*}
 In the DGD, Diffusion and EXTRA,  the following iteration schemes
\begin{align*}
&\mbox{DGD}  &\mX^{n+1} = &\mW \mX^n - \alpha \nabla F(\mX^n),\\
&\mbox{Diffusion} &\mX^{n+1} = &\mW (\mX^n - \beta \nabla F(\mX^n)),\\
&\mbox{EXTRA}  &\mX^{n+1}  = &(\mI + \mW)\mX^{n} - \frac{\mI + \mW}{2} \mX^{n-1} - \gamma (\nabla F(\mX^{n}) - \nabla F(\mX^{n-1})),
\end{align*}
are used,
where the step sizes are set to be $\alpha={0.99}{(\max_{\lambda\in \Lambda} \|\mH_{D_\lambda}\|^2)^{-1}}$ and $\beta=\gamma=2\alpha$.
There are several different choices of the mixing matrix $\mW$ \cite{Yuan2016a,Sayed2014,Xiao2004,Boyd2004,Xiao2007}. In our simulations, we
use the Metropolis mixing matrix \cite{Boyd2004,Xiao2007} with entries  $w_{\lambda,\lambda'}, \lambda, \lambda'\in \Lambda$ defined by
\begin{align}\label{eq:mixingmatrix}
w_{\lambda,\lambda'}=\begin{cases}
(\max\{\# N_\lambda, \# N_{\lambda'}\} + 0.1)^{-1} & {\rm if} \ \lambda, \lambda' \mbox{ are connected}\\
1-\sum_{\tilde \lambda \in N_\lambda}W_{\lambda,\tilde \lambda} & {\rm if} \ \lambda'=\lambda\\
0 & \mbox{otherwise}
\end{cases}
\end{align}
as  the mixing matrix ${\bf W}$, where  $N_\lambda$ is the set of all nodes connected to the vertex $\lambda\in \Lambda$. %denotes the neighbors of $i$, that is, all the nodes connected to node $i$.

We test our proposed DAC method, DGD, Diffusion, and EXTRA on four geometric random graphs with size 256, 512, 1024, and 2048 respectively and plot in \Cref{fig:ls} the average logarithm error $\log_{10}\|\vx^n - \hat{\vx}\|$ with respect to different running time (seconds) over 100 random selections of the observation vector $\vb$.

We observe from \Cref{fig:ls} that the proposed DAC method has superior performance to all the other three, it takes much less computational time to reach the machine accuracy and the corresponding computational time is close to a linear dependence on the order of the graph. This strongly suggests that the proposed DAC method has a strong scalability and the great potential to apply distributed algorithms on networks of extremely large size.

\begin{figure}[t] %[!h]
\begin{subfigure}[t]{0.48\textwidth}
\includegraphics[width = \textwidth]{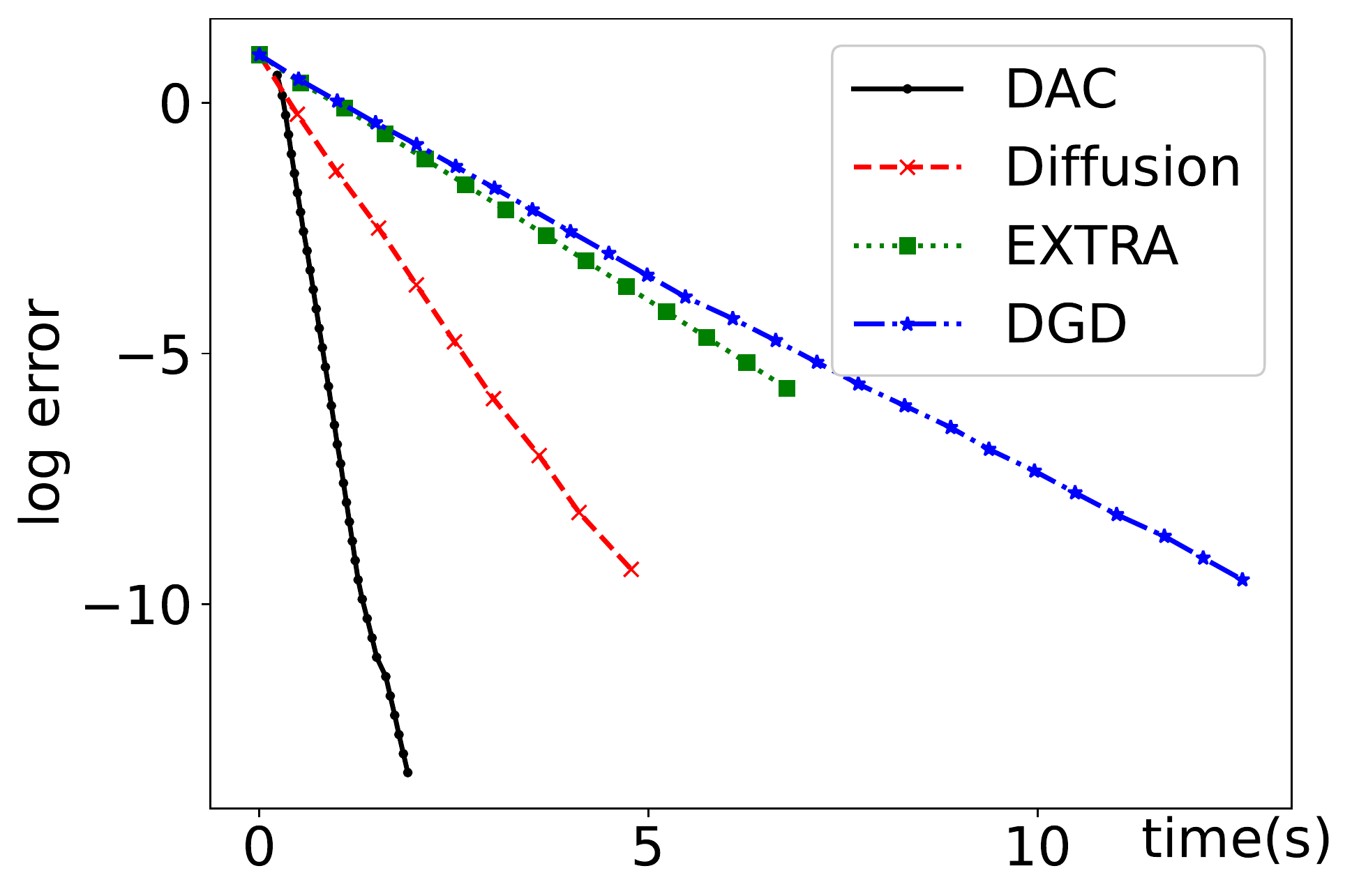}
\caption{$N=256$}
\end{subfigure}
\begin{subfigure}[t]{0.48\textwidth}
\includegraphics[width = \textwidth]{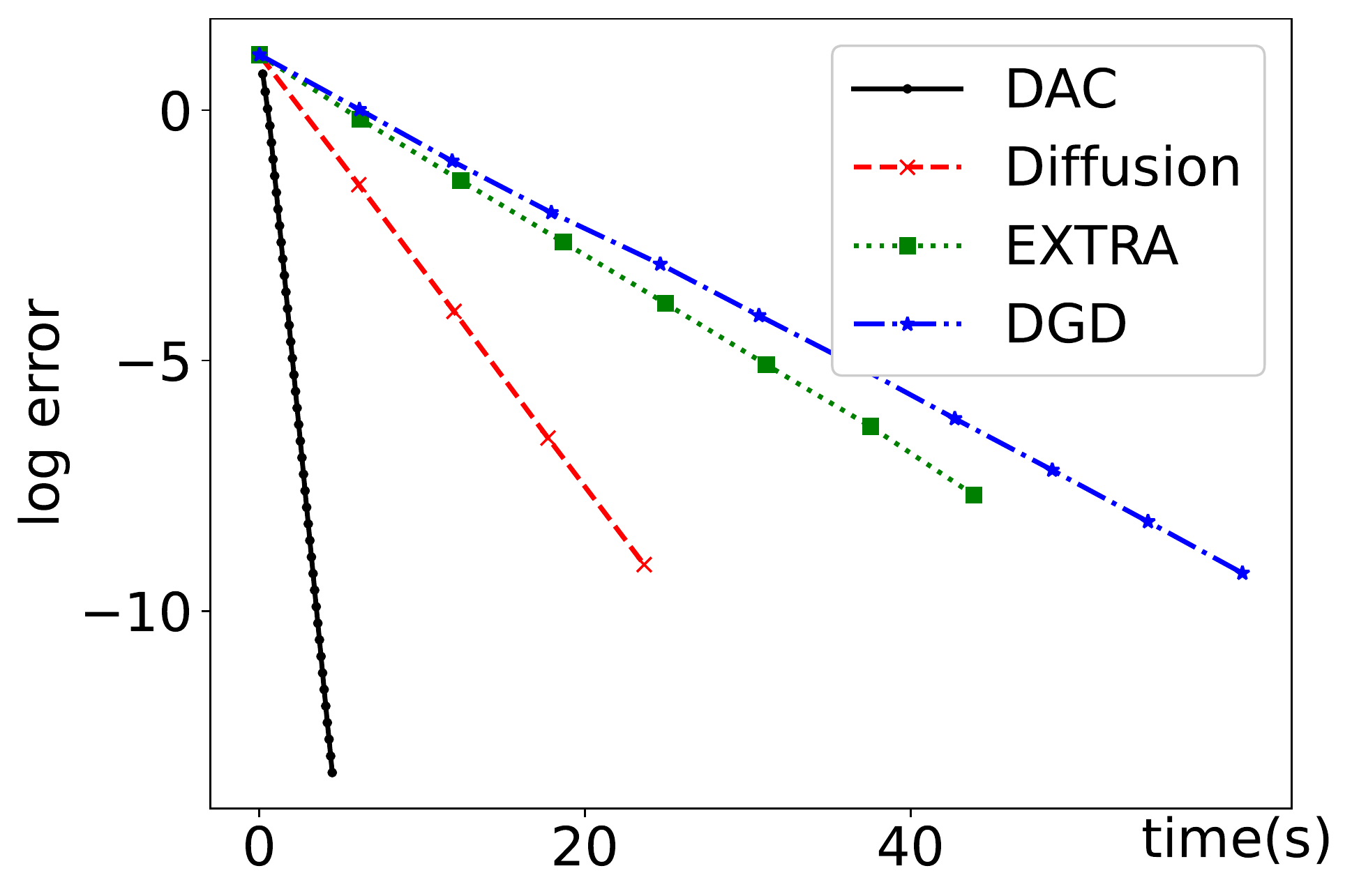}
\caption{$N=512$}
\end{subfigure}\\
\begin{subfigure}[t]{0.48\textwidth}
\includegraphics[width = \textwidth]{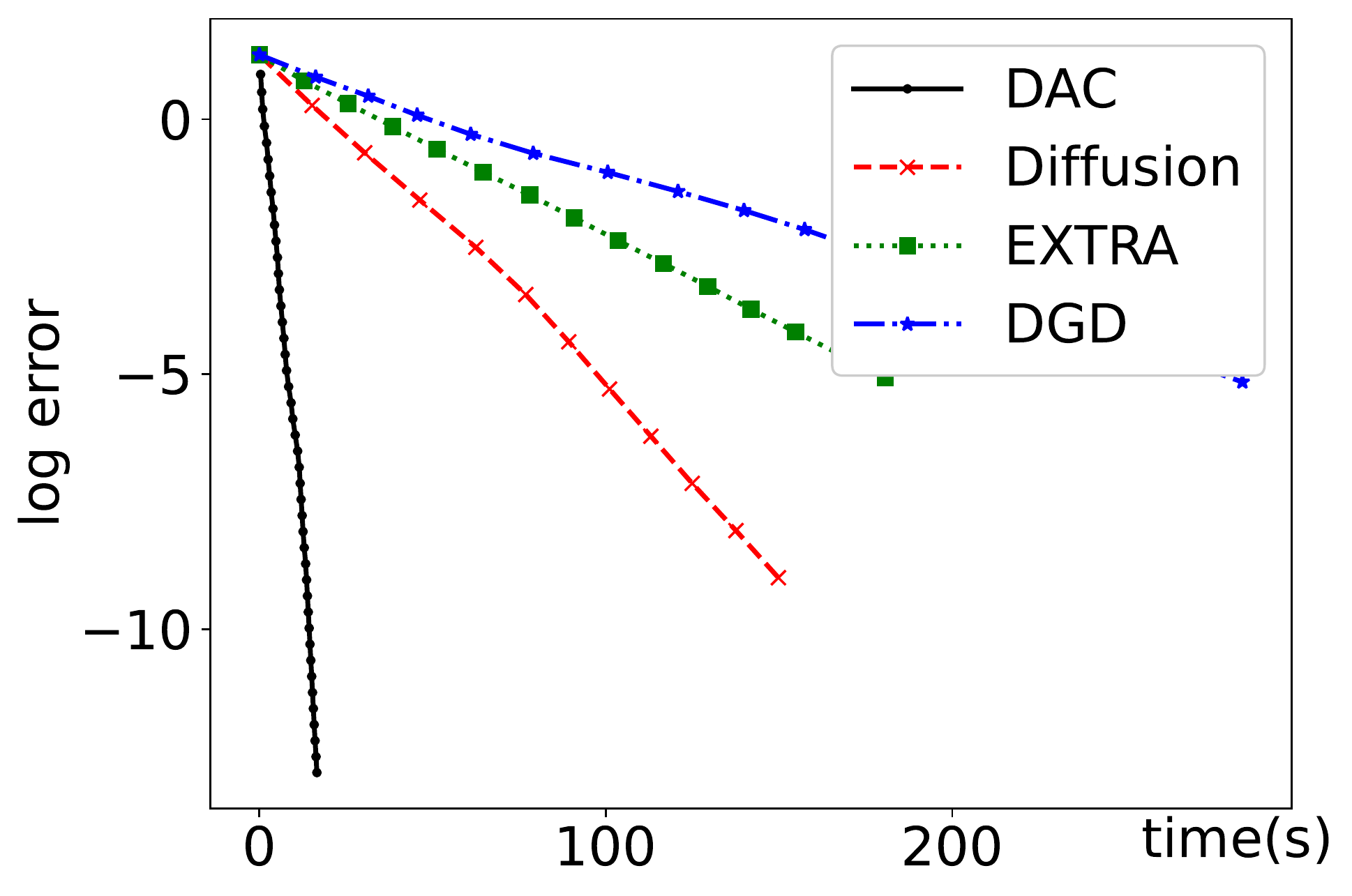}
\caption{$N=1024$}
\end{subfigure}
\begin{subfigure}[t]{0.48\textwidth}
\includegraphics[width = \textwidth]{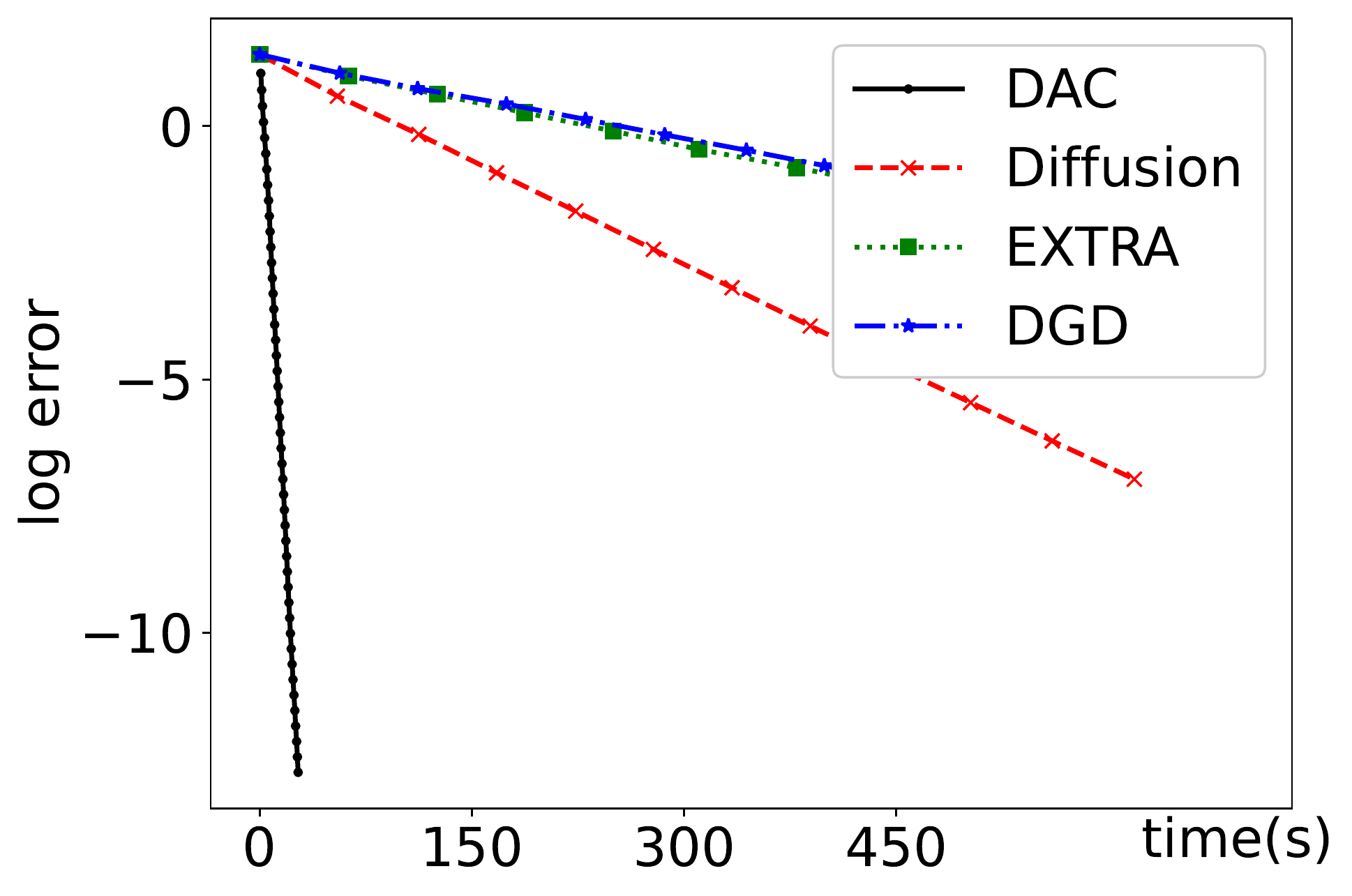}
\caption{$N=2048$}
\end{subfigure}
\caption{The logarithm error over running time (seconds) for  least square problems \eqref{eq:least_square} on random geometric graphs of size $N$}
\label{fig:ls}
\end{figure}

\subsection{LASSO}
In this subsection, we consider solving the following least squares problem with $\ell^1$ penalty,
\begin{equation}\label{lasso}
\hat\vx=\arg\min_{\vx\in {\mathbb R}^N} \Big\{ F(\vx)=\frac{1}{2} \|{\bf H}\vx-{\bf b}\|_2^2+\mu \|\vx\|_1\Big\}
\end{equation}
on the random geometric graph ${\mathcal G}_N$,
where $\mH, \vb$ are the same as those in \cref{eq:least_square} and $\mu=10$.
In the implementation of the proposed DAC algorithm, we use  the same $\Lambda$, $D_\Lambda$, $D_{\lambda,R}$ and the stopping criterion as the ones  in  Section \ref{LeastSquare.Problem}.
Our  numerical results show that the proposed DAC algorithm is {\bf applicable} to the above LASSO model and it has a superior performance comparing to some popular decentralized methods, including NIDS and PG-EXTRA.
Here we remark that  the local objective functions
\begin{align*}
f_{i}(\vx)=\frac{1}{2} \bigg[\sum_{j\in B(i,m)} H(i,j)x(j)-b(i)\bigg]^2+\mu |x(i)|,\   i\in V,
\end{align*}
in the above LASSO model are not differentiable and hence
\Cref{ConvergenceTheorem} can not be applied to guarantee the exponential convergence
of the proposed DAC algorithm in the above LASSO model.

For both NIDS and PG-EXTRA, the settings of fusion centers are the same as DGD in the least squares problem, and the local  objective function at each fusion center is:
\begin{align*}
F_\lambda(\vx)= g_\lambda(\vx) + h_\lambda(\vx), \quad \lambda\in \Lambda,
\end{align*}
where $g_\lambda(\vx) = \frac{1}{2}\|\mH_{D_\lambda}\vx - \vb_{D_\lambda}\|$ and $h_\lambda(\vx) = \frac{\mu}{\# \Lambda}\|\vx\|_1$. The iteration scheme for NIDS is
\begin{align*}
\mX^{n+1} &= \Prox_{\alpha h} (\mZ^n)\\
\mZ^{n+1} &= \mZ^n - \mX^{n+1} + \Big(\mI - \frac{\beta}{2}(\mI - \mW) \Big) \Big(2\mX^{n+1} - \mX^n + \alpha (\nabla g(\mX^n) - \nabla g(\mX^{n+1})) \Big),
\end{align*}
and the iteration scheme for PG-EXTRA is
\begin{align*}
\mX^{n+1} &= \Prox_{\alpha h} (\mZ^n)\\
\mZ^{n+1} &= \mZ^n - \mX^{n+1} + \frac{\mI + \mW}{2} \big(2\mX^{n+1} - \mX^n\big) + \gamma (\nabla g(\mX^n) - \nabla g(\mX^{n+1})) ,
\end{align*}
where $\Prox_{\alpha h} (\mZ^n) = ((\Prox_{\alpha h_\lambda} (\vz^n_\lambda))^T)_{\lambda\in \Lambda}$, $\nabla g(\mX^n) = (\nabla^T g_\lambda(\mX^n))_{\lambda\in \Lambda}$
and $\mW$ is the same Metropolis mixing matrix as in \cref{eq:mixingmatrix}. The step sizes for NIDS are $\alpha=1.99(\max_{\lambda\in \Lambda} \|\mH_{D_\lambda}\|^2)^{-1}$ and $\beta=\frac{2}{\alpha}$, while the step size for PG-EXTRA is $\gamma=\alpha/2$.
Shown in  \Cref{fig:lasso} are the numerical results to solve the LASSO model
\eqref{lasso} via the DAC, NIDS and PG-EXTRA  algorithms. It is observed that our proposed DAC method converges faster than both PG-EXTRA and NIDS do, and
the  computational cost of DAC is also close to be linear with respect to the size of networks, which makes it more scalable than the other two do.

\begin{figure}[t] %[!h]
\begin{subfigure}[t]{0.48\textwidth}
\includegraphics[width = \textwidth]{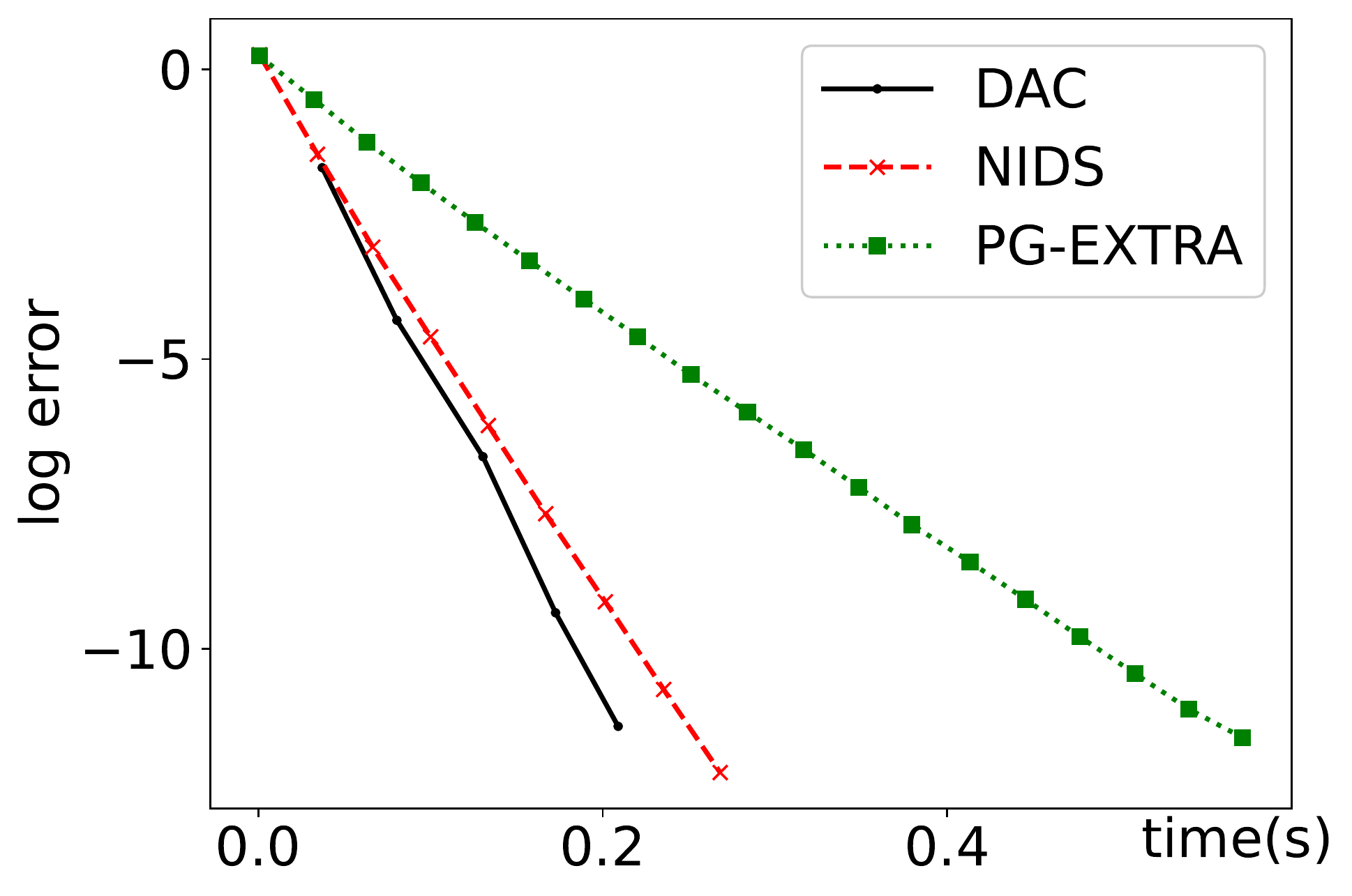}
\caption{$N=256$}
\end{subfigure}
\begin{subfigure}[t]{0.48\textwidth}
\includegraphics[width = \textwidth]{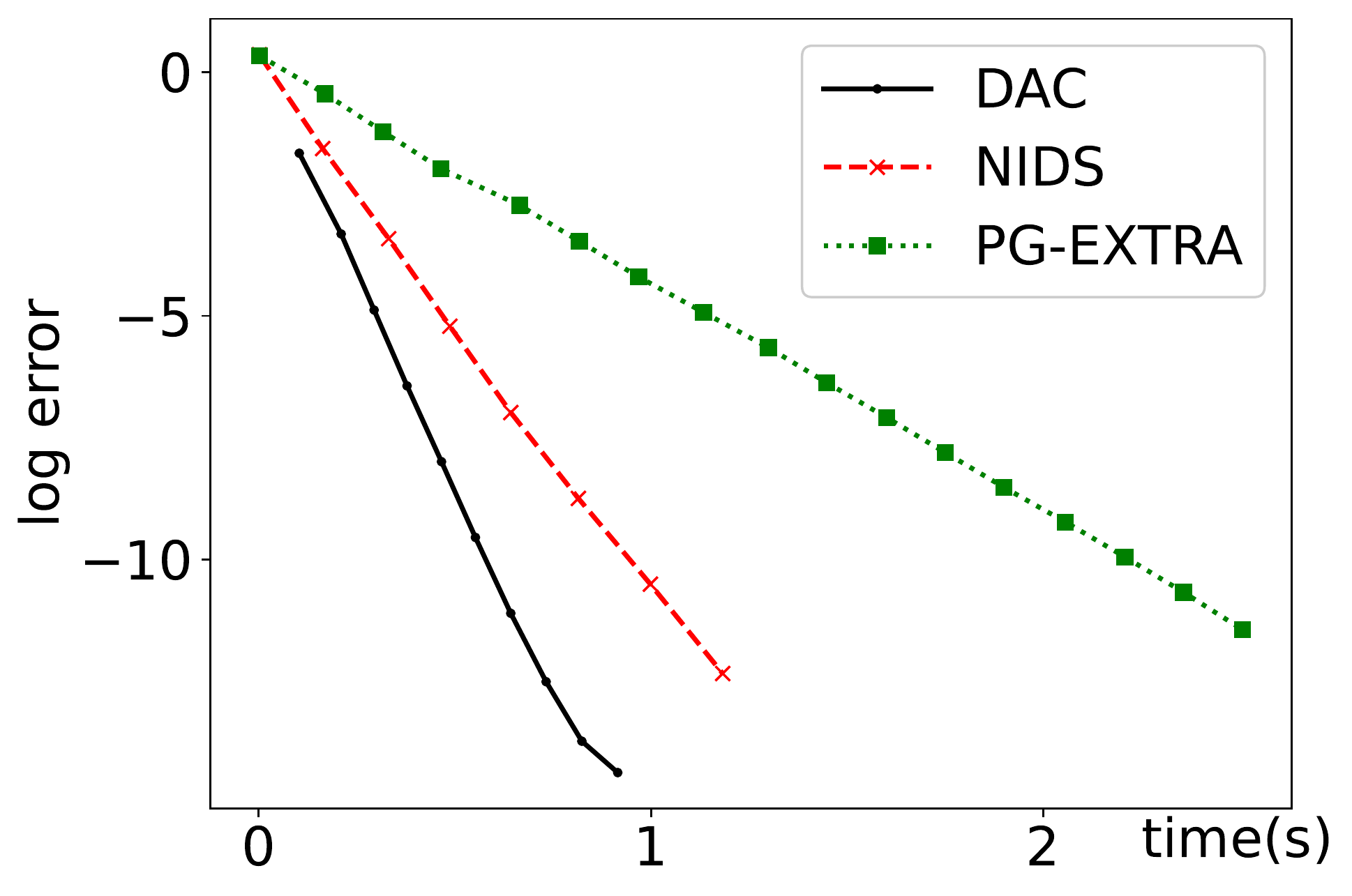}
\caption{$N=512$}
\end{subfigure}\\
\begin{subfigure}[t]{0.48\textwidth}
\includegraphics[width = \textwidth]{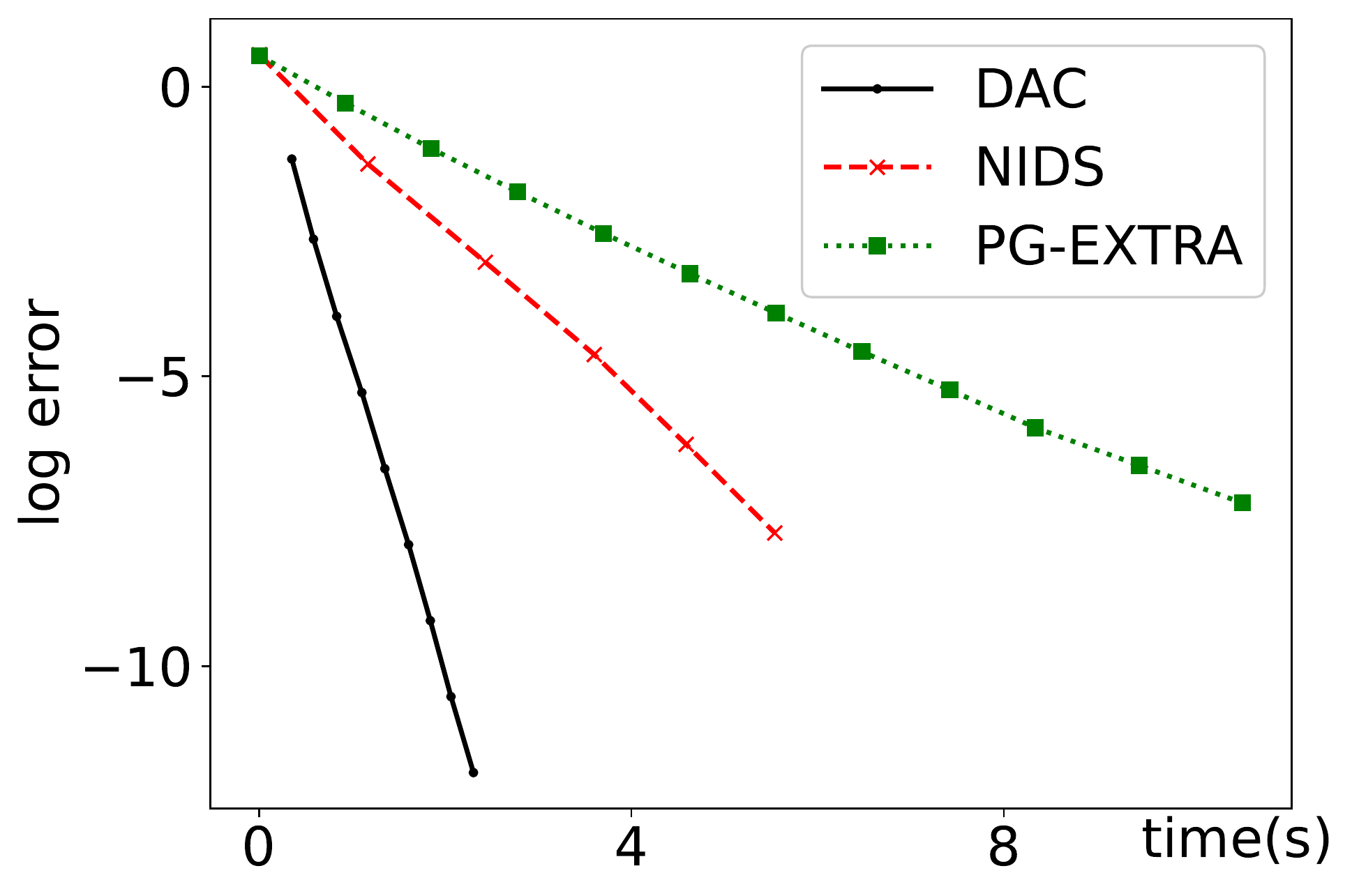}
\caption{$N=1024$}
\end{subfigure}
\begin{subfigure}[t]{0.48\textwidth}
\includegraphics[width = \textwidth]{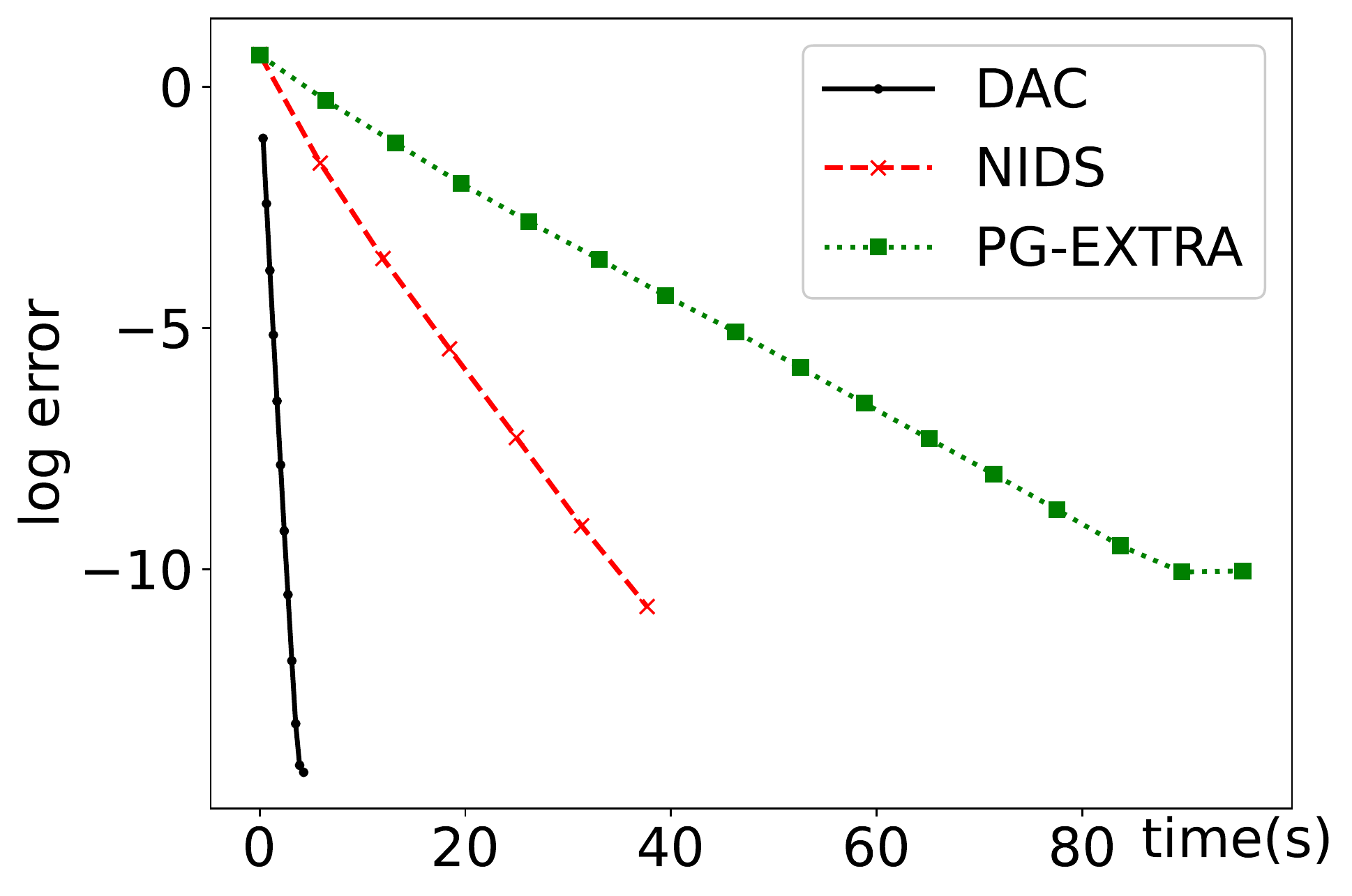}
\caption{$N=2048$}
\end{subfigure}
\caption{The logarithm error over running time (seconds) for the LASSO model \eqref{lasso}  on random geometric graphs of size $N$}
\label{fig:lasso}
\end{figure}

\section{Proofs}\label{proof.section}

In this section, we collect the proofs of  Theorems \ref{ConvergenceTheorem} and
\ref{inexactConvergenceTheorem}.

\subsection{Proof of Theorem \ref{ConvergenceTheorem}}\label{convergencetheorem.proofsection}
To prove Theorem \ref{ConvergenceTheorem}, we need two technical lemmas.
First we follow  the argument used in the proof of
Theorem IV.4 in \cite{Jiang19} to show that the inverse of
 a positive definite matrix  with limited geodesic-width has exponential off-diagonal decay.
The well-localization for the inverse of matrices
 is of great importance in
applied harmonic analysis, numerical analysis, distributed optimization and
many mathematical and engineering fields, see \cite{Grochenig2010a, Krishtal2011, Shin19, Shin2013, Fang2021}
for historical remarks and recent advances.

\begin{lemma}\label{Offdiagonaldecay.lemma} Let  $\cG=(V, E)$ be a connected undirected graph, and
matrices ${\mA}, {\bf B}$ on the graph $\cG$  satisfy
   \begin{equation}\label{Offdiagonaldecay.lemma.eq1}
\omega({\mA})\le \omega \ \ {\rm and}\  \
c{\bf I}\preceq{\mA}\preceq L {\mI}
\end{equation}
and
   \begin{equation*}
\omega({\mB})\le \omega \ \ {\rm and}\  \
\|{\mB}\|_{{\mathcal B}_2}\le M,
\end{equation*}
where $\omega$ is a positive integer and $c, L, M$ are positive constants with $0<c<L$.
Then  ${\mA}^{-1}=(G_1(i,j))_{i,j\in V}$ and ${\mA}^{-1} {\mB}=(G_2(i,j))_{i,j\in V}$ have exponential off-diagonal decay,
\begin{equation*}
|G_1(i,j)|\le  \frac{1}{c} \Big(1-\frac{c}{L}\Big)^{\rho(i,j)/\omega} \ \  {\rm and} \ \
|G_2(i,j)|\le \frac{LM}{c(L-c)} \Big(1-\frac{c}{L}\Big)^{\rho(i,j)/\omega},\ \ i, j\in V.
\end{equation*}
\end{lemma}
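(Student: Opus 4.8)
The plan is to expand the inverse as a Neumann series and to exploit that powers of a band-limited matrix are again band-limited with linearly growing geodesic-width, following the scheme of the proof of Theorem IV.4 in \cite{Jiang19}. Put $\mathbf{P} = \mathbf{I} - L^{-1}\mathbf{A}$. By \eqref{Offdiagonaldecay.lemma.eq1}, $\mathbf{A}$ is symmetric positive definite with spectrum in $[c,L]$, so $\mathbf{P}$ is symmetric with spectrum in $[0, 1-c/L]$; in particular $\|\mathbf{P}\|_{\mathcal{B}_2} \le 1 - c/L < 1$. Also $\omega(\mathbf{P}) \le \max(\omega(\mathbf{I}), \omega(\mathbf{A})) = \omega(\mathbf{A}) \le \omega$. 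Hence $\mathbf{A} = L(\mathbf{I} - \mathbf{P})$ is invertible with $\mathbf{A}^{-1} = L^{-1}\sum_{k=0}^\infty \mathbf{P}^k$ and $\mathbf{A}^{-1}\mathbf{B} = L^{-1}\sum_{k=0}^\infty \mathbf{P}^k\mathbf{B}$, both series converging in $\mathcal{B}_2$ and, since by \eqref{NormInequality} the entrywise bound is dominated by the $\mathcal B_2$-norm, entrywise as well.

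The combinatorial core is that geodesic-widths add under multiplication: if $\omega(\mathbf{M}_1)\le\omega_1$ and $\omega(\mathbf{M}_2)\le\omega_2$, then for $\rho(i,j) > \omega_1+\omega_2$ every summand $M_1(i,k)M_2(k,j)$ of $(\mathbf{M}_1\mathbf{M}_2)(i,j)$ vanishes, because a nonzero product would force $\rho(i,k)\le\omega_1$ and $\rho(k,j)\le\omega_2$, whence $\rho(i,j)\le\omega_1+\omega_2$ by the triangle inequality; thus $\omega(\mathbf{M}_1\mathbf{M}_2)\le\omega_1+\omega_2$. Iterating gives $\omega(\mathbf{P}^k)\le k\omega$ and $\omega(\mathbf{P}^k\mathbf{B})\le(k+1)\omega$. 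Consequently, writing $K=\lceil \rho(i,j)/\omega\rceil$, the entry $(\mathbf{P}^k)(i,j)$ is zero whenever $k\omega<\rho(i,j)$, i.e. whenever $k\le K-1$, and $(\mathbf{P}^k\mathbf{B})(i,j)$ is zero whenever $(k+1)\omega<\rho(i,j)$, i.e. whenever $k\le K-2$.

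Combining this truncation with the geometric bounds $|(\mathbf{P}^k)(i,j)|\le\|\mathbf{P}^k\|_{\mathcal B_2}\le(1-c/L)^k$ and $|(\mathbf{P}^k\mathbf{B})(i,j)|\le\|\mathbf{P}\|_{\mathcal B_2}^k\|\mathbf{B}\|_{\mathcal B_2}\le M(1-c/L)^k$ (using \eqref{NormInequality} and submultiplicativity of $\|\cdot\|_{\mathcal B_2}$), I would sum the tail geometric series:
\[
|G_1(i,j)|\le\frac{1}{L}\sum_{k\ge K}(1-c/L)^k=\frac{1}{c}\Big(1-\frac{c}{L}\Big)^{K},\qquad
|G_2(i,j)|\le\frac{M}{L}\sum_{k\ge K-1}(1-c/L)^k=\frac{LM}{c(L-c)}\Big(1-\frac{c}{L}\Big)^{K},
\]
where I used $1-(1-c/L)=c/L$ and $(1-c/L)^{-1}=L/(L-c)$. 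Since $0<1-c/L<1$ and $K\ge\rho(i,j)/\omega$, we have $(1-c/L)^{K}\le(1-c/L)^{\rho(i,j)/\omega}$, which gives the claimed estimates.

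I do not expect a genuine obstacle here: the width-additivity lemma is elementary, and the only care needed is (i) legitimizing entrywise manipulation of the Neumann series, which is immediate from absolute convergence together with $\|\cdot\|_\infty\le\|\cdot\|_{\mathcal B_2}$ in \eqref{NormInequality}, and (ii) bookkeeping the ceiling $K$ so the exponents for $G_1$ and $G_2$ come out exactly as stated — the shift by one in the starting index of the $G_2$ series is precisely what produces the extra factor $L/(L-c)$.
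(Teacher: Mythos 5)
Your proposal is correct and follows essentially the same route as the paper: the Neumann series for $\mathbf{A}^{-1}=L^{-1}\sum_{k\ge 0}(\mathbf{I}-L^{-1}\mathbf{A})^k$, the additivity of geodesic-widths under matrix multiplication to kill the first $\lceil\rho(i,j)/\omega\rceil$ (resp. $\lceil\rho(i,j)/\omega\rceil-1$) terms entrywise, and the tail geometric sum combined with $\|\cdot\|_\infty\le\|\cdot\|_{\mathcal{B}_2}$. The constants and the index shift producing the extra factor $L/(L-c)$ for $G_2$ match the paper's computation exactly.
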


\begin{proof}
Set  ${\bf D}={\bf I}-L^{-1} {\bf A}$.
 Then it follows from \eqref{Offdiagonaldecay.lemma.eq1} that
\begin{equation}\label{Offdiagonaldecay.lemma.pfeq1}
\omega({\bf D})\le \omega \ \  {\rm and} \ \ \|{\bf D}\|_{\mathcal{B}_2}\le 1-c/L.
 \end{equation}
Hence
\begin{equation} \label{Offdiagonaldecay.lemma.pfeq2}
{\bf A}^{-1}= L^{-1} \sum_{n=0}^\infty {\bf D}^n,
\end{equation}
and
\begin{equation} \label{Offdiagonaldecay.lemma.pfeq3}
\omega ({\bf D}^n)\le n \omega \ \ {\rm and}\ \  \omega ({\bf D}^n{\bf B})\le (n+1) \omega , \ n\ge 0.\end{equation}
Take $i,j\in V$, let $n_0(i,j)$ be the smallest nonnegative integer  such that $\rho(i,j)/\omega \le n_0(i,j)$, and write  ${\bf D}^n=(D_n(i,j))_{i,j\in V}, n\ge 0$.
Then we obtain  from \eqref{NormInequality},
\eqref{Offdiagonaldecay.lemma.pfeq1}, \eqref{Offdiagonaldecay.lemma.pfeq2} and \eqref{Offdiagonaldecay.lemma.pfeq3} that
\begin{eqnarray*}
|G_1(i,j)|&\hskip-0.08in = & \hskip-0.08in L^{-1}\Big|\sum_{n=0}^{\infty} D_n(i,j)\Big|= L^{-1}\Big|\sum_{n=n_0(i,j)}^{\infty} D_n(i,j)\Big| \\
&\hskip-0.08in\le &\hskip-0.08in L^{-1} \sum_{n=n_0(i,j)}^{\infty} \|{\bf D}^n\|_{\infty} \le L^{-1}\sum_{n=n_0(i,j)}^{\infty} \|{\bf D}^n\|_{\mathcal{B}_2}\le L^{-1}\sum_{n=n_0(i,j)}^{\infty} \|{\bf D}\|^n_{\mathcal{B}_2}
\\
& \hskip-0.08in \le  & \hskip-0.08in L^{-1} \sum_{n=n_0(i,j)}^{\infty} \Big(1-\frac{c}{L}\Big)^n
=\frac{ 1}{c}\Big(1-\frac{c}{L}\Big)^{n_0(i,j)}
\le \frac{1}{c} \Big(1-\frac{c}{L}\Big)^{\rho(i,j)/\omega}
\end{eqnarray*}
and similarly
\begin{eqnarray*}
|G_2(i,j)|&\hskip-0.08in \le & \hskip-0.08in
  L^{-1}\sum_{n=n_0(i,j)-1}^{\infty} \|{\bf D}^n{\bf B}\|_{\mathcal{B}_2}
\le \frac{LM}{c(L-c)} \Big(1-\frac{c}{L}\Big)^{\rho(i,j)/\omega}, \ \ i,j\in V.
\end{eqnarray*}
This completes the proof.
\end{proof}

To prove the Theorem \ref{ConvergenceTheorem}, we also need the following lemma about the summation of an  exponential decay sequence.
\begin{lemma}
 Let $\cG=(V, E)$ be as in Theorem \ref{ConvergenceTheorem}.
Then for all  $R\ge 1$ and $\beta>0$,
\begin{equation}  \label{exponentialdecay.lemma.eq1}
\sum_{j\in V \ {\rm with}\ \rho(i,j)\ge R} e^{-\beta  \rho(i,j)}\le
 {D_1(\cG) d(\cG)! } \beta^{-d(\cG)}   (R+1)^{d({\mathcal G})}e^{-\beta(R-1)}, \ \ i\in V.
\end{equation}
\end{lemma}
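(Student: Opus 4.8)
The goal is to bound the tail sum $\sum_{j:\rho(i,j)\ge R} e^{-\beta\rho(i,j)}$ using only the polynomial growth property from Assumption \ref{graphstructure.assumption}.

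My plan is to organize the sum by \emph{distance shells}. First I would group the vertices by their exact geodesic distance from $i$: writing $S_k = \{j\in V : \rho(i,j)=k\}$, we have $\#S_k \le \#B(i,k) \le D_1(\cG)(k+1)^{d(\cG)}$ by \eqref{PolynomialGrowth}, and the exponential weight is constant ($=e^{-\beta k}$) on each shell. Hence
\begin{equation*}
\sum_{j:\rho(i,j)\ge R} e^{-\beta\rho(i,j)} = \sum_{k\ge R} \#S_k \, e^{-\beta k} \le D_1(\cG)\sum_{k\ge R} (k+1)^{d(\cG)} e^{-\beta k}.
\end{equation*}
So everything reduces to a purely scalar estimate of $\sum_{k\ge R}(k+1)^{d}e^{-\beta k}$, with $d=d(\cG)$.

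Next I would control this scalar tail. Factor out $e^{-\beta(R-1)}$ and substitute $k = R-1+\ell$ (so $\ell\ge 1$), giving $\sum_{k\ge R}(k+1)^d e^{-\beta k} = e^{-\beta(R-1)}\sum_{\ell\ge 1}(R+\ell)^d e^{-\beta\ell}$. Using $(R+\ell)^d \le (R+1)^d \ell^d$ for $\ell\ge 1$ (since $R+\ell \le (R+1)\ell$ when $R\ge 1,\ell\ge 1$), this is at most $(R+1)^d e^{-\beta(R-1)}\sum_{\ell\ge 1}\ell^d e^{-\beta\ell}$. The remaining job is to show $\sum_{\ell\ge 1}\ell^d e^{-\beta\ell} \le d!\,\beta^{-d}$. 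I would get this either from the integral comparison $\sum_{\ell\ge 1}\ell^d e^{-\beta\ell} \le \int_0^\infty t^d e^{-\beta t}\,dt = d!\,\beta^{-d}$ (valid because $t\mapsto t^d e^{-\beta t}$ has the right monotonicity pattern so that each term $\ell^d e^{-\beta\ell}$ is dominated by $\int_{\ell-1}^{\ell}$ of a suitable shifted integrand — this needs a small amount of care), or, more cleanly, by differentiating the geometric series: $\sum_{\ell\ge 0} e^{-\beta\ell x}$ manipulations give $\sum_{\ell\ge 1}\ell^d e^{-\beta\ell} = (-1)^d \frac{d^d}{d\beta^d}\frac{1}{1-e^{-\beta}} - (\text{the }\ell=0\text{ term contributes }0)$, and one bounds the derivatives. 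Chaining the inequalities yields exactly the claimed bound $D_1(\cG)d(\cG)!\,\beta^{-d(\cG)}(R+1)^{d(\cG)}e^{-\beta(R-1)}$.

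The main obstacle is the scalar estimate $\sum_{\ell\ge 1}\ell^d e^{-\beta\ell}\le d!\,\beta^{-d}$: the naive integral bound $\sum_{\ell\ge 1}\ell^d e^{-\beta\ell}\le\int_0^\infty t^d e^{-\beta t}dt$ is \emph{not} immediate termwise (the integrand increases then decreases), so I would instead compare $\ell^d e^{-\beta\ell}$ with $\int_{\ell}^{\ell+1} t^d e^{-\beta(t-1)}\,dt$ or use the derivative-of-geometric-series identity together with the elementary bound $\frac{1}{1-e^{-\beta}}\le 1+\frac1\beta$ and induction on $d$. Everything else — the shell decomposition and the polynomial-growth substitution — is routine bookkeeping; the constants $(R+1)^d$ and $e^{-\beta(R-1)}$ fall out of the index shift $k=R-1+\ell$ with no slack beyond the stated bound.
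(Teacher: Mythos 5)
There is a genuine gap, and it sits exactly where you flagged the ``main obstacle'': the scalar inequality $\sum_{\ell\ge 1}\ell^{d}e^{-\beta\ell}\le d!\,\beta^{-d}$ that your chain requires is false for small $\beta$. Already for $d=0$ it reads $\sum_{\ell\ge1}e^{-\beta\ell}=\frac{1}{e^{\beta}-1}\le 1$, which fails for $\beta=0.1$ (the left side is about $9.5$); in general $\sum_{\ell\ge1}\ell^{d}e^{-\beta\ell}\sim d!\,\beta^{-d-1}$ as $\beta\to0^{+}$, one full power of $\beta$ worse than what you need, and neither the integral comparison nor the differentiated geometric series can do better because they are computing the correct asymptotics. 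The root cause is upstream: bounding each shell by the full ball, $\#S_k\le\#B(i,k)\le D_1(\cG)(k+1)^{d(\cG)}$, discards the fact that the shells are disjoint (their counts sum to $N$, whereas your bounds sum to a divergent series), and this is precisely where the factor $\beta$ is lost. So your intermediate quantity $D_1(\cG)\sum_{k\ge R}(k+1)^{d}e^{-\beta k}$ genuinely exceeds the lemma's right-hand side for small $\beta$, and no downstream manipulation can recover the stated constant.

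The fix is summation by parts, which is what the paper does (in a continuum-limit form with a mesh $\sigma\to0$). In your discrete shell language: write $\#S_k=\#B(i,k)-\#B(i,k-1)$ and use Abel summation to get
\begin{equation*}
\sum_{k\ge R}\#S_k\,e^{-\beta k}\;\le\;\sum_{k\ge R}\#B(i,k)\bigl(e^{-\beta k}-e^{-\beta(k+1)}\bigr)\;\le\;(1-e^{-\beta})\,D_1(\cG)\sum_{k\ge R}(k+1)^{d(\cG)}e^{-\beta k},
\end{equation*}
so that the cumulative ball bound is paired with the \emph{increment} of the exponential weight rather than with the weight itself. The prefactor $1-e^{-\beta}\le\beta$ supplies exactly the missing power of $\beta$, after which your index shift $k=R-1+\ell$, the inequality $(R+\ell)^{d}\le(R+1)^{d}\ell^{d}$, and the comparison of $\sum_{\ell\ge1}\ell^{d}e^{-\beta\ell}$ with $\int_0^\infty t^{d}e^{-\beta t}\,dt=d!\,\beta^{-d-1}$ combine to give $\beta\cdot d!\,\beta^{-d-1}=d!\,\beta^{-d}$ as claimed. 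Everything else in your write-up (the shell decomposition itself, the use of polynomial growth, the handling of $(R+1)^{d}$ and $e^{-\beta(R-1)}$) matches the paper's argument and is fine.
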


\begin{proof}
We follow the arguments in \cite{Cheng17, Jiang19, Shin19}.  Take $i\in V$.
For any $0<\sigma<1$, we have
\begin{eqnarray*}
\sum_{j\in V \ {\rm with}\ \rho(i,j)\ge R} e^{-\beta \rho(i,j)}& \hskip-0.08in \le & \hskip-0.08in  \sum_{k\in {\mathbb Z}\ {\rm with}\  k\sigma\ge R} e^{-(k-1)\beta \sigma}\mu_{\cG}\big(B(i, k\sigma)\backslash B(i, (k-1)\sigma)\big)\nonumber\\
& \hskip-0.08in \le & \hskip-0.08in  \sum_{k\in {\mathbb Z}\ {\rm with}\  k\sigma\ge R} (e^{-(k-1)\beta \sigma}- e^{-k\beta \sigma})\mu_{\cG}\big(B(i, k\sigma)\big)\nonumber\\
& \hskip-0.08in \le & \hskip-0.08in \beta \sigma  D_1(\cG) \sum_{k\in {\mathbb Z}\ {\rm with}\  k\sigma\ge R}  e^{-(k-1)\beta\sigma} (k\sigma+1)^{d(\cG)}.
\end{eqnarray*}
Taking limit $\sigma\to 0$ in the above estimate yields
\begin{eqnarray*}
\sum_{j\in V \ {\rm with}\ \rho(i,j)\ge R} e^{-\beta\rho(i,j)}
& \hskip-0.08in \le &\hskip-0.08in  \beta  D_1(\cG) \int_R^\infty (t+1)^{d(\cG)} \exp (-\beta t ) dt\nonumber\\
& \hskip-0.08in \le  &\hskip-0.08in \beta D_1(\cG)  (R+1)^{d({\mathcal R})} e^{-\beta(R-1)} \times
\int_1^\infty  s^{d(\cG)} e^{-\beta s} ds\nonumber\\
& \hskip-0.08in \le  &\hskip-0.08in   {D_1(\cG) d(\cG)! } {\beta^{-d(\cG)}  } (R+1)^{d({\mathcal G})} e^{-\beta (R-1)},
\end{eqnarray*}
where we substitute $t$ by $s=t-R+1$ in the second inequality and apply the inequality $t+1=s+R\le s (R+1)$.
This proves \eqref{exponentialdecay.lemma.eq1}.
\end{proof}

\begin{proof} [Proof of Theorem \ref{ConvergenceTheorem}]
Let $\vw_\lambda^n, \lambda\in \Lambda$, be as in \eqref{DACalgorithm.defa}.
By \eqref{GlobalHessianBounds.eq1}  and
 \eqref{ConvergenceTheorem.pfeq0}, we have for $\lambda\in \Lambda$
 \begin{eqnarray}\label{ConvergenceTheorem.pfeq2}
& \hskip-0.08in &  \hskip-0.08in \chi_{D_{\lambda, R}} \nabla F(\mI_{D_{\lambda,R}}\hat \vx+\mI_{V\backslash D_{\lambda,R}} \vx^{n})\nonumber\\
& \hskip-0.08in = &  \hskip-0.08in \chi_{D_{\lambda, R}}\big( \nabla F(\mI_{D_{\lambda,R}}\hat \vx+\mI_{V\backslash D_{\lambda,R}} \vx^{n})-
\nabla F(\chi^*_{D_{\lambda,R}} \vw_{\lambda}^n+\mI_{V\backslash D_{\lambda,R}} \vx^{n})\big)\nonumber\\
\quad & \hskip-0.08in = & \hskip-0.08in
\chi_{D_{\lambda, R}} {\bf \Phi}\big(\mI_{D_{\lambda,R}}\hat \vx+\mI_{V\backslash D_{\lambda,R}} \vx^{n},\chi^*_{D_{\lambda,R}} \vw_{\lambda}^n+\mI_{V\backslash D_{\lambda,R}} \vx^{n}\big)
  \mI_{D_{\lambda,R}} ( \hat\vx-\chi^*_{D_{\lambda,R}}\vw_{\lambda}^n).
\end{eqnarray}
For $\lambda\in \Lambda$, define
\begin{equation*} \label{ConvergenceTheorem.pfeq3}
{\bf \Phi}_{\lambda, R,  n}={\bf I}_{D_{\lambda, R}} {\bf \Phi}\big(\mI_{D_{\lambda,R}}\hat \vx+\mI_{V\backslash D_{\lambda,R}} \vx^{n}, \chi^*_{D_{\lambda,R}} \vw_{\lambda}^n+\mI_{V\backslash D_{\lambda,R}} \vx^{n}\big)
 {\bf I}_{D_{\lambda,R}}+ \frac{L+c}{2} {\bf I}_{V\backslash D_{\lambda, R}}.
\end{equation*}
Then it  follows from
\eqref{GlobalHessianBounds.eq2} and \eqref{ConvergenceTheorem.pfeq2} that
\begin{equation} \label{ConvergenceTheorem.pfeq4-}
\omega({\bf \Phi}_{\lambda, R, n})\le 2m, \ \    c {\bf I}\le {\bf \Phi}_{\lambda, R, n}\le L
 {\bf I},
\end{equation}
and
\begin{equation} \label{ConvergenceTheorem.pfeq4}
  \mI_{D_{\lambda,R}}(\chi^*_{D_{\lambda,R}} \vw_{\lambda}^n- \hat\vx)= -  \mI_{D_{\lambda,R}} ({\bf \Phi}_{\lambda, R,  n})^{-1} \mI_{D_{\lambda, R}} \nabla F(\mI_{D_{\lambda,R}}\hat \vx+\mI_{V\backslash D_{\lambda,R}} \vx^{n}), \ \ \lambda\in \Lambda.
 \end{equation}

By \eqref{GlobalHessianBounds.eq2} and \eqref{GlobalHessianBounds.eq1}, we conclude that  the objective function $F$ is strictly convex, and hence the solution $\hat \vx={\rm arg}\min F(\vx)$ satisfies
  \begin{equation}\label{ConvergenceTheorem.pfeq5}
 \nabla F({\hat \vx})={\bf 0}.
\end{equation}
Following the argument used in \eqref{ConvergenceTheorem.pfeq2}
and applying \eqref{GlobalHessianBounds.eq1}  and \eqref{ConvergenceTheorem.pfeq5}, we obtain
 \begin{eqnarray}\label{ConvergenceTheorem.pfeq6}
\chi_{D_{\lambda, R}} \nabla F(\mI_{D_{\lambda,R}}\hat \vx+\mI_{V\backslash D_{\lambda,R}} \vx^{n})
=\chi_{D_{\lambda, R}}
{\bf \Phi}(\mI_{D_{\lambda,R}}\hat \vx+\mI_{V\backslash D_{\lambda,R}} \vx^{n},
\hat \vx)\mI_{V\backslash D_{q,R}} (\vx^n-\hat\vx).
\end{eqnarray}

 Combining \eqref{DACalgorithm.defb}, \eqref{ConvergenceTheorem.pfeq4} and \eqref{ConvergenceTheorem.pfeq6} yields
\begin{equation} \label{ConvergenceTheorem.pfeq7}
\vx^{n+1}-\hat\vx  = \sum_{\lambda\in \Lambda}\mI_{D_{\lambda}}(\chi^*_{D_{\lambda,R}}\vw_{\lambda}^n-\hat\vx)
=: {\bf H}_n  (\vx^n-\hat\vx),
\end{equation}
where
\begin{equation}\label{ConvergenceTheorem.pfeq8}
{\bf H}_n= -\sum_{\lambda\in \Lambda}  {\bf I}_{D_{\lambda}}
 ({\bf \Phi}_{\lambda, R,  n})^{-1} {\bf I}_{D_{\lambda, R}}
{\bf \Phi}(\mI_{D_{\lambda,R}}\hat \vx+\mI_{V\backslash D_{\lambda,R}} \vx^{n},
\hat \vx){\bf I}_{V\backslash D_{\lambda,R}},\  n\ge 0.\end{equation}

\smallskip
By \eqref{GlobalHessianBounds.eq2}  and \eqref{NormInequality},
we have
\begin{equation} \label{ConvergenceTheorem.pfeq8}
\omega\big({\bf I}_{D_{\lambda, R}} {\bf \Phi}(\mI_{D_{\lambda,R}}\hat \vx+\mI_{V\backslash D_{\lambda,R}} \vx^{n},
\hat \vx)\big)\le 2m \ \ {\rm and} \ \ \big\|{\bf I}_{D_{\lambda, R}} {\bf \Phi}(\mI_{D_{\lambda,R}}\hat \vx+\mI_{V\backslash D_{q,R}} \vx^{n},
\hat \vx)\big\|_{{\mathcal B}_2} \le L.
\end{equation}
Write ${\bf H}_n=(H_n(i,j))_{i,j\in V}$. Then for any $i,j\in V$  we get from \eqref{governingvertices.def}, \eqref{extendedneighbor.def}, \eqref{ConvergenceTheorem.pfeq4-}, \eqref{ConvergenceTheorem.pfeq8} and  Lemma \ref{Offdiagonaldecay.lemma}  that
\begin{eqnarray} \label{ConvergenceTheorem.pfeq9}
|H_n(i,j)| & \hskip-0.08in \le &  \hskip-0.08in \frac{L^2}{c(L-c)} \Big(1-\frac{c}{L}\Big)^{\rho(i,j)/(2m)} \times \sum_{\lambda\in \Lambda}  \mathbbm{1}_{D_\lambda}(i) \mathbbm{1}_{V\backslash D_{\lambda, R}}(j)\nonumber\\
&  \hskip-0.08in \le &  \hskip-0.08in \left\{\begin{array}{ll} \frac{L^2}{c(L-c)} \Big(1-\frac{c}{L}\Big)^{\rho(i,j)/(2m)}
& {\rm if}\ \rho(i,j)>R\\
0 & {\rm if}\ \rho(i,j)\le R,
\end{array}
\right.
 \end{eqnarray}
 where $\mathbbm{1}_E$ is the indicator function on a set $E$.
Let $\delta_R$ be as in \eqref{deltaR.def}.  Combining  \eqref{exponentialdecay.lemma.eq1} and \eqref{ConvergenceTheorem.pfeq9} yields
\begin{equation} \label{ConvergenceTheorem.pfeq17}
\|{\bf H}_n\|_{\mathcal S} \le
\sup_{i\in V} \sum_{\rho(i,j)>R}  \frac{L^2}{c(L-c)} \Big(1-\frac{c}{L}\Big)^{\rho(i,j)/(2m)}\le \delta_R.
 \end{equation}

By \eqref{NormInequality}, \eqref{ConvergenceTheorem.pfeq7} and
\eqref{ConvergenceTheorem.pfeq17}, we obtain
\begin{equation}\label{ConvergenceTheorem.pfeq18}
\|\vx^{n+1}-\hat\vx \|_p\le \|{\bf H}_n \|_{\mathcal S} \|\vx^{n+1}-\hat\vx \|_p\le \delta_R \|\vx^{n}-\hat\vx \|_p, \ n\ge 0.
\end{equation}
Applying \eqref{ConvergenceTheorem.pfeq18} iteratively proves \eqref{ConvergenceTheorem.eq4} and hence completes the proof.
\end{proof}

\subsection{Proof of Theorem \ref{inexactConvergenceTheorem}}
\label{inexactConvergenceTheorem.proofsection}

Let $\tilde \vw_\lambda^n, \lambda\in \Lambda$, be as in \eqref{inexactDACalgorithm.defa}.
By \eqref{GlobalHessianBounds.eq1}  and
\eqref{ConvergenceTheorem.pfeq5}, we have
 \begin{eqnarray}\label{inexactConvergenceTheorem.pfeq1}
& \hskip-0.08in &  \hskip-0.08in \chi_{D_{\lambda, R}} \nabla F(\mI_{D_{\lambda,R}}\hat \vx+\mI_{V\backslash D_{\lambda,R}} \tilde \vx^{n})\nonumber\\
& \hskip-0.08in = &  \hskip-0.08in \chi_{D_{\lambda, R}}\big( \nabla F(\mI_{D_{\lambda,R}}\hat \vx+\mI_{V\backslash D_{\lambda,R}} \tilde \vx^{n})-
\nabla F(\chi^*_{D_{\lambda,R}} \tilde \vw_{\lambda}^n+\mI_{V\backslash D_{\lambda,R}} \tilde \vx^{n})\big)\nonumber\\
& & +\chi_{D_{\lambda, R}} \nabla F(\chi^*_{D_{\lambda,R}} \tilde \vw_{\lambda}^n+\mI_{V\backslash D_{\lambda,R}}\tilde \vx^{n})\nonumber\\
\quad & \hskip-0.08in = & \hskip-0.08in
\chi_{D_{\lambda, R}} {\bf \Phi}\big(\mI_{D_{\lambda,R}}\hat \vx+\mI_{V\backslash D_{\lambda,R}} \tilde \vx^{n}, \chi^*_{D_{\lambda,R}} \tilde \vw_{\lambda}^n+\mI_{V\backslash D_{\lambda,R}} \tilde \vx^{n}\big)
 \mI_{D_{\lambda,R}} (\hat\vx- \chi^*_{D_{\lambda,R}} \tilde \vw_{\lambda}^n)\nonumber\\
& & +\chi_{D_{\lambda, R}} \nabla F(\chi^*_{D_{\lambda,R}} \tilde \vw_{\lambda}^n+\mI_{V\backslash D_{\lambda,R}}\tilde \vx^{n})
\end{eqnarray}
and
 \begin{equation}\label{inexactConvergenceTheorem.pfeq2}
 \chi_{D_{\lambda, R}} \nabla F(\mI_{D_{\lambda,R}}\hat \vx+\mI_{V\backslash D_{\lambda,R}} \tilde \vx^{n})
 =
\chi_{D_{\lambda, R}}
{\bf \Phi}(\mI_{D_{\lambda,R}}\hat \vx+\mI_{V\backslash D_{\lambda,R}} \tilde \vx^{n},
\hat \vx)\mI_{V\backslash D_{\lambda,R}} (\tilde \vx^n-\hat\vx),  \ \lambda\in \Lambda.
\end{equation}
Define
\begin{equation*}
\tilde {\bf \Phi}_{\lambda, R,  n}={\bf I}_{D_{\lambda, R}} {\bf \Phi}\big(\mI_{D_{\lambda,R}}\hat \vx+\mI_{V\backslash D_{\lambda,R}}\tilde \vx^{n}, \chi^*_{D_{\lambda,R}} \tilde\vw_{\lambda}^n+\mI_{V\backslash D_{\lambda,R}} \tilde \vx^{n}\big)
 {\bf I}_{D_{\lambda,R}}+ \frac{L+c}{2} {\bf I}_{V\backslash D_{\lambda, R}}, \ \ \lambda\in \Lambda,
\end{equation*}
and
\begin{equation*}
\tilde {\bf H}_n= -\sum_{\lambda\in \Lambda}  {\bf I}_{D_{\lambda}}
 (\tilde {\bf \Phi}_{\lambda, R,  n})^{-1} {\bf I}_{D_{\lambda, R}}
{\bf \Phi}(\mI_{D_{\lambda,R}}\hat \vx+\mI_{V\backslash D_{\lambda,R}} \tilde \vx^{n},
\hat \vx){\bf I}_{V\backslash D_{\lambda,R}},\  n\ge 0.
\end{equation*}
By \eqref{inexactDACalgorithm.defb}, \eqref{inexactConvergenceTheorem.pfeq1}
and \eqref{inexactConvergenceTheorem.pfeq2}, we obtain
\begin{equation} \label{inexactConvergenceTheorem.pfeq5}
\tilde \vx^{n+1}-\hat\vx
=\tilde {\bf H}_n  (\tilde\vx^n-\hat\vx)+\sum_{\lambda\in \Lambda} \mI_{D_\lambda} \big (\tilde {\bf \Phi}_{\lambda, R,  n}\big)^{-1}
\mI_{D_{\lambda, R}} \nabla F(\chi^*_{D_{\lambda,R}} \tilde \vw_{\lambda}^n+\mI_{V\backslash D_{\lambda,R}}\tilde \vx^{n})
\end{equation}
Following the argument used in the proof of Theorem \ref{ConvergenceTheorem}, we can show that
\begin{equation} \label{inexactConvergenceTheorem.pfeq6}
\omega(\tilde {\bf \Phi}_{\lambda, R, n})\le 2m \ \ {\rm and}\  \   c {\bf I}\le \tilde {\bf \Phi}_{\lambda, R, n}\le L{\bf I}, \ \lambda\in \Lambda,
\end{equation}
and
\begin{equation} \label{inexactConvergenceTheorem.pfeq7}
\|{\bf H}_n\|_{\mathcal S}\le \delta_R.
\end{equation}
Write  $ \big (\tilde {\bf \Phi}_{\lambda, R,  n}\big)^{-1}=(G_1(i,j))_{i,j\in V}$. Then  from
 \eqref{inexactConvergenceTheorem.pfeq6} and Lemma \ref{Offdiagonaldecay.lemma} we obtain
  \begin{equation*}
  |G_1(i,j)|\le \frac{1}{c} \Big(1-\frac{c}{L}\Big)^{\rho(i,j)/(2m)},\  i, j\in V.
  \end{equation*}
  This together with \eqref{governingvertices.def}, \eqref{extendedneighbor.def} and \eqref{inexactDACalgorithm.defa} implies that
\begin{eqnarray}  \label{inexactConvergenceTheorem.pfeq8}
& \hskip-0.08in  & \hskip-0.08in \Big\|  \sum_{\lambda\in \Lambda} \mI_{D_\lambda} \big (\tilde {\bf \Phi}_{\lambda, R,  n}\big)^{-1}
\mI_{D_{\lambda, R}} \nabla F(\chi^*_{D_{\lambda,R}} \tilde \vw_{\lambda}^n+\mI_{V\backslash D_{\lambda,R}}\tilde \vx^{n})\Big\|_\infty\nonumber\\
& \hskip-0.08in \le  & \hskip-0.08in \frac{\epsilon_n}{c}
\sup_{i\in V} \sum_{\lambda\in \Lambda}\sum_{j\in V} \mathbbm{1}_{D_\lambda}(i)
 \Big(1-\frac{c}{L}\Big)^{\rho(i,j)/(2m)} \mathbbm{1}_{D_{\lambda, R}}(j)\le  \frac{L-c}{L^2} \delta_R \epsilon_n, \ n\ge 0.
\end{eqnarray}

Combining \eqref{NormInequality}, \eqref{inexactConvergenceTheorem.pfeq5}, \eqref{inexactConvergenceTheorem.pfeq7}and \eqref{inexactConvergenceTheorem.pfeq8}, we get
\begin{equation}   \label{inexactConvergenceTheorem.pfeq9}
\|\tilde \vx^{n+1}-\hat x\|_\infty\le \delta_R \|\tilde \vx^n-\hat \vx\|_\infty+ \frac{L-c}{L^2} \delta_R \epsilon_n,\  n\ge 0.
\end{equation}
Applying \eqref{inexactConvergenceTheorem.pfeq9} repeatedly proved the desired conclusion \eqref{inexactConvergenceTheorem.eq1}.

\bibliographystyle{siam}

\bibliography{Distributed_Load_Optimization.bib}

\end{document}